\newtheorem{proposition}{Proposition}[section]
\newtheorem{corollary}[proposition]{Corollary}
\newtheorem{theorem}[proposition]{Theorem}
\theoremstyle{definition}
\newtheorem{definition}[proposition]{Definition}
\newtheorem{example}[proposition]{Example}
\newtheorem{examples}[proposition]{Examples}
\newcommand{\thlabel}[1]{\label{th:#1}}
\newcommand{\thref}[1]{Theorem~\ref{th:#1}}
\newcommand{\selabel}[1]{\label{se:#1}}
\newcommand{\seref}[1]{Section~\ref{se:#1}}
\newcommand{\prlabel}[1]{\label{pr:#1}}
\newcommand{\prref}[1]{Proposition~\ref{pr:#1}}
\newcommand{\colabel}[1]{\label{co:#1}}
\newcommand{\coref}[1]{Corollary~\ref{co:#1}}
\newcommand{\exlabel}[1]{\label{ex:#1}}
\newcommand{\exref}[1]{Example~\ref{ex:#1}}
\newcommand{\eqlabel}[1]{\label{eq:#1}}
\newcommand{\equref}[1]{(\ref{eq:#1})}
\def\ot{\otimes}
\newcommand{\Cc}{\mathcal{C}}
\def\*C{{}^*\hspace*{-1pt}{\Cc}}
\def\text#1{{\rm {\rm #1}}}
\begin{document}

\title[Unified products and split extensions of Hopf algebras]
{Unified products and split extensions of Hopf algebras}

\author{A. L. Agore}\thanks{A.L. Agore is research fellow ``aspirant'' of FWO-Vlaanderen; G. Militaru
was supported by the CNCS - UEFISCDI grant no. 88/5.10.2011 'Hopf
algebras and related topics'.}
\address{Faculty of Engineering, Vrije Universiteit Brussel, Pleinlaan 2, B-1050 Brussels, Belgium}
\email{ana.agore@vub.ac.be and ana.agore@gmail.com}

\author{G. Militaru}
\address{Faculty of Mathematics and Computer Science, University of Bucharest, Str.
Academiei 14, RO-010014 Bucharest 1, Romania}
\email{gigel.militaru@fmi.unibuc.ro and gigel.militaru@gmail.com}
\subjclass[2010]{16T10, 16T05, 16S40}

\keywords{split extensions of Hopf algebras, (bi)crossed products,
biproducts.}

\begin{abstract}
The unified product was defined in \cite{am3} related to the
restricted extending structure problem for Hopf algebras: a Hopf
algebra $E$ factorizes through a Hopf subalgebra $A$ and a
subcoalgebra $H$ such that $1\in H$ if and only if $E$ is
isomorphic to a unified product $A \ltimes H$. Using the concept
of normality of a morphism of coalgebras in the sense of
Andruskiewitsch and Devoto we prove an equivalent description for
the unified product from the point of view of split morphisms of
Hopf algebras. A Hopf algebra $E$ is isomorphic to a unified
product $A \ltimes H$ if and only if there exists a morphism of
Hopf algebras $i: A \rightarrow E$ which has a retraction $\pi: E
\to A$ that is a normal left $A$-module coalgebra morphism. A
necessary and sufficient condition for the canonical morphism $i :
A \to A\ltimes H$ to be a split monomorphism of bialgebras is
proved, i.e. a condition for the unified product $A\ltimes H$ to
be isomorphic to a Radford biproduct $L \ast A$, for some
bialgebra $L$ in the category $_{A}^{A}{\mathcal YD}$ of
Yetter-Drinfel'd modules. As a consequence, we present a general
method to construct unified products arising from an unitary not
necessarily associative bialgebra $H$ that is a right $A$-module
coalgebra and a unitary coalgebra map $\gamma : H \to A$
satisfying four compatibility conditions. Such an example is
worked out in detail for a group $G$, a pointed right $G$-set $(X,
\cdot, \lhd )$ and a map $\gamma : G \to X$.
\end{abstract}

\maketitle

\section*{Introduction}
A morphism $i : C \to D$ in a category ${\mathcal C}$ is a split
monomorphism if there exists $p: D \to C$ a morphism in ${\mathcal
C}$ such that $p\circ i = {\rm Id}_C$. Fundamental constructions
like the semidirect product of groups or Lie algebras, the smash
product of Hopf algebras, Radford's biproduct, etc can be viewed
as tools to answer the following problem: \emph{describe, when is
possible, split monomorphisms in a given category ${\mathcal C}$.}
The basic example is the following: a group $E$ is isomorphic to a
semidirect product $G\ltimes A$ of groups if and only if there
exists $i : A \to E$ a split monomorphism of groups. In this case
$E \cong G\ltimes A$, where $G = {\rm Ker} (p)$, for a splitting
morphism $p: E \to A$. The generalization of this elementary
result at the level of Hopf algebras was done in two steps. The
first one was made by Molnar in \cite[Theorem 4.1]{molnar}: if $i
: A \to E$ is a split monomorphism of Hopf algebras having a
splitting map $p: E \to A$ which is a \emph{normal} morphism of
Hopf algebras then $E$ is isomorphic as a Hopf algebra to a smash
product $G \# A$ of Hopf algebras, where $G := {\rm KER} (p) = \{
x\in E \, | \, x_{(1)} \ot p (x_{(2)}) \ot x_{(3)} = x_{(1)} \ot
1_A \ot x_{(2)} \}$, is the kernel of $p$ in the category of Hopf
algebras (the original statement of Molnar's theorem, as well as
the one of Radford below, is restated and updated according to the
present development of the theory). The general case was done by
Radford in \cite[Theorem 3]{radford}: if $i : A \to E$ is a split
monomorphism of Hopf algebras then $E$ is isomorphic as a Hopf
algebra to a biproduct $G \ast A$, for a bialgebra $G$ in the
pre-braided category $_{A}^{A}{\mathcal YD}$ of left-left
Yetter-Drinfel'd modules. The price paid for leaving aside the
normality assumption of the splitting map is reasonable: the
algebra structure of the biproduct $G \ast A$ is the one of the
smash product algebras and the coalgebra structure of $G \ast A$
is given by the smash coproduct of coalgebras. In the last decade,
the biproduct (also named bosonization) was intensively used in
the classification of finite dimensional pointed Hopf algebras
(see \cite{AS} and the references therein).

The great impact generated by Radford's theorem made his result
the subject of many generalizations and recent developments
(\cite{ABM}, \cite{AM}, \cite{AMS}, \cite{Sch2}). The first step
was made by Schauenburg, who considered the splitting map $p$ to
be only a coalgebra map. Although it is formulated in a different
manner, \cite[Theorem 5.1]{Sch2} proves that if a morphism of Hopf
algebras $i : A \to E$ has a retraction $\pi: E \to A$ which is a
left $A$-module coalgebra morphism then $E$ is isomorphic to a new
product $G \propto A$, that generalizes the biproduct, in the
construction of which $G$ and $A$ are connected by four maps
satisfying seventeen compatibility conditions. As a vector space
$G \propto A = G \ot A$ with the multiplication given by a very
laborious formula. The special case in which the splitting map $p$
is a coalgebra $A$-bimodule map was highlighted briefly in
\cite[Section 6.1]{Sch2} and in full detail in \cite[Theorem
3.64]{AMSt}.

The unified product was constructed in \cite{am3} from a
completely different reason, related to what we have called the
\emph{extending structures problem}. Given a Hopf algebra $A$ and
a coalgebra $E$ such that $A$ is a subcoalgebra of $E$, the
extending structures problem asks for the description and
classification of all Hopf algebra structures that can be defined
on the coalgebra $E$ such that $A$ becomes a Hopf subalgebra of
$E$. The unified product is the tool for solving the restricted
version of the ES-problem: a Hopf algebra $E$ factorizes through a
Hopf subalgebra $A$ and a subcoalgebra $H$ such that $1\in H$ if
and only if $E$ is isomorphic to a unified product $A \ltimes H$.
In the construction of a unified product $A \ltimes H$ the Hopf
algebra $A$ and the coalgebra $H$ are connected by three coalgebra
maps: two actions $\triangleleft : H \otimes A \rightarrow H$,
$\triangleright: H \otimes A \rightarrow A$ and a generalized
cocycle $f: H \otimes H \rightarrow A$ satisfying seven
compatibility conditions.

In this paper we shall prove an equivalent description for the
unified product from the view point of split extensions of Hopf
algebras. Let $A \ltimes H$ be a unified product associated to a
bialgebra extending structure $\Omega(A) = \bigl(H, \triangleleft,
\, \triangleright, \, f \bigl)$ of a Hopf algebra $A$ (see
\seref{prel}). Then we have an extension of bialgebras $i_A: A \to
A \ltimes H$. This extension is split in the sense of \cite{Sch2}:
there exists $\pi_A : A \ltimes H \to A$ a left $A$-module
coalgebra morphism such that $\pi_A \circ i_A = {\rm Id}_A$. Thus
the unified product $A \ltimes H$ appears as a special case of the
Schauenburg's product $A \propto H$ but is a much more malleable
version of it. Furthermore, there is more to be said and this
makes a substantial difference: $\pi_A$ is also a \emph{normal}
morphism of coalgebras in the sense of Andruskiewitsch and Devoto
\cite{AD}. This context fully characterizes unified products: we
prove that a Hopf algebra $E$ is isomorphic to a unified product
$A \ltimes H$ if and only if there exists a morphism of Hopf
algebras $i: A \rightarrow E$ which has a retraction $\pi: E \to
A$ that is a normal left $A$-module coalgebra morphism
(\thref{3split}). The proof of \thref{3split} is different from
the one in \cite{Sch2} and is based on the factorization theorem
\cite[Theorem 2.7]{am3}. As a bonus, a simplified and more
transparent version of \cite[Theorem 4.1]{molnar} is obtained in
\coref{molnar}: if $\pi : E \to A$ is a normal split epimorphism
of Hopf algebras, then $ E \cong A \# H$, where $A \# H$ is the
right version of the smash product of bialgebras for some right
$A$-module bialgebra $H$.

The next aim of the paper is to make the connection between the
unified product and Radford's biproduct: for a Hopf algebra $A$,
\prref{splitmono1} gives necessary and sufficient conditions for
$i_A: A \to A \ltimes H$ to be a split monomorphism of bialgebras.
In this case the unified product $A \ltimes H$ is isomorphic as a
bialgebra to a biproduct $L \ast A$, and the structure of $L$ as a
bialgebra in the category ${}_{A}^{A}{}{\mathcal YD}$ of
Yetter-Drinfel'd modules is explicitly described. Furthermore, in
this context a new equivalent description of the unified product
$A\ltimes H$, as well as of the associated biproduct $L \ast A$,
is given in \prref{produsulciudat}: both of them are isomorphic to
a new product $A \circledast H$, which is a deformation of the
smash product of bialgebras $A\# H$ using a coalgebra map $\gamma
: H \to A$. Finally, \thref{construnifbirp} gives a general method
for constructing unified products, as well as biproducts arising
from a right $A$-module coalgebra $(H, \lhd)$ and a unitary
coalgebra map $\gamma : H \to A$. \exref{ultex} gives an explicit
example of such a product starting with a group $G$, a pointed
right $G$-set $(X, \cdot, \lhd )$ and a map $\gamma : G \to X$
satisfying two compatibility conditions.

\section{Preliminaries}\selabel{prel}
Throughout this paper, $k$ will be a field. Unless specified
otherwise, all modules, algebras, coalgebras, bialgebras, tensor
products, homomorphisms and so on are over $k$. For a coalgebra
$C$, we use Sweedler's $\Sigma$-notation: $\Delta(c) = c_{(1)}\ot
c_{(2)}$, $(I\ot\Delta)\Delta(c) = c_{(1)}\ot c_{(2)}\ot c_{(3)}$,
etc (summation understood). We also use the Sweedler notation for
left $C$-comodules: $\rho (m) = m_{<-1>}\otimes m_{<0>}$, for any
$m \in M$ if $(M,\rho) \in {}^C{\mathcal M}$ is a left
$C$-comodule. Let $A$ be a bialgebra and $H$ an algebra and a
coalgebra. A $k$-linear map $f: H \ot H \to A$ will be denoted by
$f (g, \, h) = f (g\ot h)$; $f$ is the \textit{trivial map} if $f
(g, h) = \varepsilon_H (g) \varepsilon_H (h) 1_A$, for all $g$,
$h\in H$. Similarly, the $k$-linear maps $\triangleleft : H
\otimes A \rightarrow H$, $\triangleright: H \otimes A \rightarrow
A$ are the \textit{trivial actions} if $h \triangleleft a =
\varepsilon_A (a) h$ and respectively $h\triangleright a =
\varepsilon_H(h) a$, for all $a\in A$ and $h\in H$.

For a Hopf algebra $A$ we denote by
${}_{A}^A{\mathcal M}$ the category of left-left $A$-Hopf modules:
the objects are triples $(M, \cdot, \rho)$, where $(M, \cdot) \in
{}_A{\mathcal M}$ is a left $A$-module, $(M, \rho) \in
{}^A{\mathcal M}$ is a left $A$-comodule such that
$$
\rho (a \cdot m) = a_{(1)} m_{<-1>} \otimes a_{(2)} \cdot m_{<0>}
$$
for all $a\in A$ and $m\in M$. For $ (M, \cdot, \rho) \in
{}_{A}^A{\mathcal M}$ we denote by $M^{{\rm co}(A)} = \{ m\in M \,
| \, \rho (m) = 1_A \ot m \}$ the subspace of coinvariants.  The
fundamental theorem for Hopf modules states that for any $A$-Hopf
module $M$ the canonical map
$$
\varphi : A \ot M^{{\rm co}(A)}  \to M, \quad \varphi (a \ot m) :=
a\cdot m
$$
for all $a\in A$ and $m\in M$ is bijective with the inverse given
by
$$
\varphi^{-1} : M \to A \ot M^{{\rm co}(A)}, \quad \varphi^{-1} (m)
:= m_{<-2>} \ot S (m_{<-1>}) \cdot m_{<0>}
$$
for all $m\in M$.

${}_{A}^{A}{}{\mathcal YD}$ will denote the pre-braided monoidal
category of left-left Yetter-Drinfel'd modules over $A$: the
objects are triples $(M, \cdot, \rho)$, where $(M, \cdot) \in
{}_A{\mathcal M}$ is a left $A$-module, $(M, \rho) \in
{}^A{\mathcal M}$ is a left $A$-comodule such that
$$
\rho (a \cdot m) = a_{(1)} m_{<-1>} S(a_{(3)}) \otimes a_{(2)} \cdot m_{<0>}
$$
for all $a\in A$ and $m\in M$. If $(L, \cdot, \rho)$ is a
bialgebra in the monoidal category ${}_{A}^{A}{}{\mathcal YD}$
then the Radford biproduct $L \ast A$ is the vector space $L \ot
A$ with the bialgebra structure given by
\begin{eqnarray*}
(l \ast a) \, (m \ast b) &:=& l ( a_{(1)}\cdot m) \ast a_{(2)} \, b\\
\Delta (l \ast a) &:=& l_{(1)} \ast l_{(2)<-1>} a_{(1)} \ot l_{(2)<0>} \ast a_{(2)}
\end{eqnarray*}
for all $l$, $m\in L$ and $a$, $b\in A$, where we denoted $l \otimes
a \in L \otimes A$ by $l \ast a$. $L \ast A$ is a bialgebra with
the unit $1_{L} \ast 1_{A}$ and the counit $\varepsilon_{L \ast A} (l \ast a) =
\varepsilon_{L}(l)\varepsilon_{A}(a)$, for all $l\in L$ and $a\in A$.

We recall from \cite{am3} the construction of the unified product.
An \textit{extending datum} of a bialgebra $A$ is a system
$\Omega(A) = \bigl(H, \triangleleft, \, \triangleright, \, f
\bigl)$, where $H = \bigl( H, \Delta_{H}, \varepsilon_{H}, 1_{H},
\cdot \bigl)$ is a $k$-module such that $\bigl( H, \Delta_{H},
\varepsilon_{H}\bigl)$ is a coalgebra, $\bigl( H, 1_{H}, \cdot
\bigl)$ is an unitary not necessarily associative $k$-algebra, the
$k$-linear maps $\triangleleft : H \otimes A \rightarrow H$,
$\triangleright: H \otimes A \rightarrow A$, $f: H \otimes H
\rightarrow A$ are coalgebra maps such that the following
normalization conditions hold:
\begin{equation}\eqlabel{2}
\quad h \triangleright 1_{A} = \varepsilon_{H}(h)1_{A}, \quad
1_{H} \triangleright a = a, \quad 1_{H} \triangleleft a =
\varepsilon_{A}(a)1_{H}, \quad h\triangleleft 1_{A} = h
\end{equation}
\begin{equation}\eqlabel{3}
\Delta_{H}(1_{H}) = 1_{H} \otimes 1_{H}, \qquad f(h, 1_{H}) =
f(1_{H}, h) = \varepsilon_{H}(h)1_{A}
\end{equation}
for all $h \in H$, $a \in A$.

Let $\Omega(A) = \bigl(H, \triangleleft, \, \triangleright, \, f
\bigl)$ be an extending datum of $A$. We denote by $A
\ltimes_{\Omega(A)} H = A \ltimes H$ the $k$-module $A \otimes H$
together with the multiplication:
\begin{equation}\eqlabel{10}
(a \ltimes h)\bullet(c \ltimes g) := a(h_{(1)}\triangleright
c_{(1)})f\bigl(h_{(2)}\triangleleft c_{(2)}, \, g_{(1)}\bigl) \,
\ltimes \, (h_{(3)}\triangleleft c_{(3)}) \cdot g_{(2)}
\end{equation}
for all $a, c \in A$ and $h, g \in H$, where we denoted $a \otimes
h \in A \otimes H$ by $a \ltimes h$. The object $A \ltimes H$ is
called \textit{the unified product of $A$ and $\Omega(A)$} if $A
\ltimes H$ is a bialgebra with the multiplication given by
\equref{10}, the unit $1_{A} \ltimes 1_{H}$ and the coalgebra
structure given by the tensor product of coalgebras, i.e.:
\begin{eqnarray}
\Delta_{A \ltimes H} (a \ltimes h) &{=}& a_{(1)} \ltimes h_{(1)}
\otimes a_{(2)}
\ltimes h_{(2)}\eqlabel{11}\\
\varepsilon_{A \ltimes H} (a \ltimes h) &{=}&
\varepsilon_{A}(a)\varepsilon_{H}(h)\eqlabel{12}
\end{eqnarray}
for all $h \in H$, $a \in A$. We have proved in \cite[Theorem
2.4]{am3} that $A \ltimes H$ is an unified product if and only if
$\Delta_{H} : H \to H\otimes H$ and $\varepsilon_{H} : H \to k$
are $k$-algebra maps, $(H, \lhd)$ is a right $A$-module structure
and the following compatibilities hold:
\begin{enumerate}
\item[(BE1)] $(g\cdot h)\cdot l = \bigl(g \triangleleft
f(h_{(1)}, \, l_{(1)})\bigl)\cdot (h_{(2)}\cdot l_{(2)})$\\
\item[(BE2)] $g \triangleright (ab) = (g_{(1)} \triangleright
a_{(1)})[(g_{(2)}\triangleleft a_{(2)})\triangleright b]$\\
\item[(BE3)] $(g\cdot h) \triangleleft a = [g \triangleleft
(h_{(1)}
\triangleright a_{(1)})] \cdot (h_{(2)} \triangleleft a_{(2)})$\\
\item[(BE4)] $[g_{(1)} \triangleright (h_{(1)} \triangleright
a_{(1)})]f\Bigl(g_{(2)} \triangleleft (h_{(2)} \triangleright
a_{(2)}), \, h_{(3)} \triangleleft a_{(3)}\Bigl) =
f(g_{(1)}, \, h_{(1)})[(g_{(2)} \cdot h_{(2)}) \triangleright a]$\\
\item[(BE5)] $\Bigl(g_{(1)} \triangleright f(h_{(1)}, \,
l_{(1)})\Bigl) f\Bigl(g_{(2)} \triangleleft f(h_{(2)}, \,
l_{(2)}), \, h_{(3)} \cdot l_{(3)}\Bigl) =
f(g_{(1)}, \, h_{(1)})f(g_{(2)} \cdot h_{(2)}, \, l)$\\
\item[(BE6)] $g_{(1)} \triangleleft a_{(1)} \otimes g_{(2)}
\triangleright a_{(2)} = g_{(2)} \triangleleft a_{(2)} \otimes
g_{(1)} \triangleright a_{(1)}$\\
\item[(BE7)] $g_{(1)} \cdot h_{(1)} \otimes f(g_{(2)}, \, h_{(2)})
= g_{(2)} \cdot h_{(2)} \otimes f(g_{(1)}, \, h_{(1)})$
\end{enumerate}
for all $g$, $h$, $l \in H$ and $a$, $b \in A$. In this case
$\Omega(A) = (H, \triangleleft, \, \triangleright, \, f)$ is
called a \emph{bialgebra extending structure} of $A$. A bialgebra
extending structure $\Omega(A) = (H, \triangleleft,
\triangleright, f)$ is called a \emph{Hopf algebra extending
structure} of A if $A \ltimes H$ has an antipode.

If one of the components $f$ or $\lhd$ of a bialgebra extending
structure $\bigl(H, \triangleleft, \triangleright, f \bigl)$ is
trivial, the unified product contains as special cases the
bicrossed product of bialgebras or the crossed product
\cite[Exmples 2.5]{am3}. Now, if $\rhd$ is the trivial action,
then a new product that will we used later on appears as a special
case of the unified product.

\begin{examples}\exlabel{3exemple}
$(1)$ Let $A$ be a bialgebra and $\Omega(A) = \bigl(H,
\triangleleft, \triangleright, f \bigl)$ an extending datum of $A$
such that the action $\rhd $ is trivial, that is $h \rhd a =
\varepsilon_H (h) a$, for all $h\in H$ and $a\in A$. Then
$\Omega(A) = \bigl(H, \triangleleft, \triangleright, f \bigl)$ is
a bialgebra extending structure of $A$ if and only if $\Delta_{H}
: H \to H\otimes H$ and $\varepsilon_{H} : H \to k$ are
$k$-algebra maps, $(H, \lhd)$ is a right $A$-module algebra and
the following compatibilities hold:
\begin{enumerate}
\item[(1)] $(g\cdot h)\cdot l = \bigl(g \triangleleft
f(h_{(1)}, \, l_{(1)})\bigl)\cdot (h_{(2)}\cdot l_{(2)})$\\
\item[(2)] $a_{(1)} f\bigl(g \lhd a_{(2)}, \, h \triangleleft
a_{(3)}\bigl) = f(g, \, h) a$\\
\item[(3)] $f(h_{(1)}, \, l_{(1)}) f\bigl(g \triangleleft
f(h_{(2)}, \, l_{(2)}), \, h_{(3)} \cdot l_{(3)}\bigl) =
f(g_{(1)}, \, h_{(1)})f(g_{(2)} \cdot h_{(2)}, \, l)$\\
\item[(4)] $g \triangleleft a_{(1)} \otimes a_{(2)} = g
\triangleleft a_{(2)} \otimes a_{(1)}$\\
\item[(5)] $g_{(1)} \cdot h_{(1)} \otimes f(g_{(2)}, \, h_{(2)}) =
g_{(2)} \cdot h_{(2)} \otimes f(g_{(1)}, \, h_{(1)})$
\end{enumerate}
for all $g$, $h$, $l \in H$ and $a\in A$. The unified product
$A\ltimes H$ associated to the bialgebra extending structure
$\Omega(A) = \bigl(H, \triangleleft, \triangleright, f \bigl)$
with $\rhd$ the trivial action will be denoted by $ A \lozenge H$
and will be called the \emph{twisted product} of $A$ and
$\Omega(A)$. In this bialgebra extending datum we shall omit the
action $\rhd$ and denote it by $\Omega(A) = \bigl(H, \lhd, f
\bigl)$. The bialgebra extending structure associated to
$\Omega(A) = \bigl(H, \lhd, f \bigl)$ will be called \emph{twisted
bialgebra extending structure} of $A$. Thus $A \lozenge H = A
\otimes H$ as a $k$-module with the multiplication given by:
\begin{equation}\eqlabel{101}
(a \lozenge h)\bullet(c \lozenge g) := a c_{(1)}
f\bigl(h_{(1)}\triangleleft c_{(2)}, \, g_{(1)}\bigl) \, \lozenge
\,  (h_{(2)}\triangleleft c_{(3)}) \cdot g_{(2)}
\end{equation}
for all $a$, $c \in A$, $h$ and $g\in H$, where we denoted $a
\otimes h \in A \otimes H$ by $a \lozenge h$. The twisted product
$A \lozenge H$ is a bialgebra with the tensor product of
coalgebras.

$(2)$ The right version of the smash product of Hopf algebras as
defined in \cite{molnar} is a special case of the twisted product
$A \lozenge H$, corresponding to the trivial cocycle $f$. We
explain this briefly in what follows. Let $\Omega(A) = \bigl(H,
\triangleleft, \triangleright, f \bigl)$ be an extending datum of
a bialgebra $A$ such that the action $\rhd $ and the cocycle $f$
are trivial. Then $\Omega(A) = \bigl(H, \triangleleft,
\triangleright, f \bigl)$ is a bialgebra extending structure of
$A$ if and only if $H$ is a bialgebra, $(H, \lhd)$ is a right
$A$-module bialgebra such that
$$
g \triangleleft a_{(1)} \otimes a_{(2)} = g \triangleleft a_{(2)}
\otimes a_{(1)}
$$
for all $g \in H$ and $a\in A$. In this case, the associated
unified product $A\ltimes H = A \# H$, is the right version of the
smash product of bialgebras introduced in \cite{molnar} in the
cocommutative case. Thus $A \# H = A \otimes H$ as a $k$-module
with the multiplication given by:
\begin{equation}\eqlabel{1011}
(a \# h)\bullet(c \# g) := a c_{(1)}  \, \# \,
(h_{(2)}\triangleleft c_{(2)}) g_{(2)}
\end{equation}
for all $a$, $c \in A$, $h$ and $g\in H$, where we denoted $a
\otimes h \in A \otimes H$ by $a \# h$.
\end{examples}

For a future use we shall recall the main characterization of a
unified product given in \cite[Theorem 2.7]{am3}.

\begin{theorem}\thlabel{2}
Let $E$ be a bialgebra, $A \subseteq E$ a subbialgebra, $H
\subseteq E$ a subcoalgebra such that $1_{E} \in H$ and the
multiplication map $u: A\otimes H \rightarrow E$, $u(a \otimes h)
= ah $, for all $a\in A$, $h\in H$ is bijective. Then, there
exists $\Omega(A)=(H, \triangleleft, \triangleright, f)$ a
bialgebra extending structure of $A$ such that $u: A \ltimes H
\rightarrow E$, $u(a \ltimes h) = ah$, for all $a\in A$ and $h\in
H$ is an isomorphism of bialgebras.

Explicitly, the actions, the cocycle and the multiplication of
$\Omega(A)$ are given by the formulas:
\begin{eqnarray}
\triangleright: H \otimes A \rightarrow A , \qquad \triangleright
&:=& (Id \otimes \varepsilon_{H}) \circ \mu \eqlabel{prima}\\
\triangleleft: H \otimes A \rightarrow H , \qquad \triangleleft
&:=& (\varepsilon_{A} \otimes Id) \circ \mu \eqlabel{adoua}\\
f: H\otimes H \rightarrow A , \qquad f &:=& (Id \otimes
\varepsilon_{H}) \circ \nu \eqlabel{22} \\
\cdot : H \otimes H \rightarrow H , \qquad \cdot &:=&
(\varepsilon_{A} \otimes Id) \circ \nu \eqlabel{23}
\end{eqnarray}
where
$$
\mu : H\otimes A \to A\ot H, \quad \mu (h\ot a) := u^{-1} (ha)
$$
$$
\nu : H\ot H \to A\ot H, \quad \nu (h\ot g) := u^{-1} (hg)
$$
for all $h$, $g \in H$ and $a\in A$.
\end{theorem}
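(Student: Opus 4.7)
My plan is to transport the bialgebra structure of $E$ along the bijection $u^{-1}$ onto $A\otimes H$ and then match the resulting multiplication term by term against the unified product formula \equref{10}, so that the axioms of a bialgebra extending structure come for free from the characterization \cite[Theorem 2.4]{am3}.

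First I would define $\mu$, $\nu$ and hence the four maps $\triangleright$, $\triangleleft$, $f$, $\cdot$ by the stated formulas and verify that they are coalgebra maps. Since $A$ is a subbialgebra and $H$ a subcoalgebra of $E$, the restricted multiplication maps $m_E\colon H\otimes A\to E$ and $m_E\colon H\otimes H\to E$ are coalgebra maps; because $u$ is a bijective coalgebra map (with $A\otimes H$ carrying the tensor coalgebra structure), $u^{-1}$ is a coalgebra map, and therefore so are $\mu$ and $\nu$. Composing with $\mathrm{Id}\otimes\varepsilon_H$ or $\varepsilon_A\otimes\mathrm{Id}$ then yields the four coalgebra maps required in the definition of an extending datum.

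The crucial tool is the elementary fact that any coalgebra map $\psi\colon C\to A\otimes H$ into a tensor coalgebra satisfies
\[
\psi(c) \;=\; \bigl((\mathrm{Id}\otimes\varepsilon_H)\circ\psi\bigr)(c_{(1)}) \,\otimes\, \bigl((\varepsilon_A\otimes\mathrm{Id})\circ\psi\bigr)(c_{(2)}).
\]
Applied to $\psi=\mu$ and $\psi=\nu$ and then composed with $u$, this produces the two identities in $E$
\[
ha \,=\, (h_{(1)}\triangleright a_{(1)})\,(h_{(2)}\triangleleft a_{(2)}), \qquad hg \,=\, f(h_{(1)},g_{(1)})\,(h_{(2)}\cdot g_{(2)}),
\]
valid for all $h,g\in H$ and $a\in A$. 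The normalization conditions \equref{2} and \equref{3} then fall out immediately from $u(1_A\otimes 1_H)=1_E$ together with $\mu(1_H\otimes a)=1_A\otimes a$, $\mu(h\otimes 1_A)=1_A\otimes h$ and their $\nu$-analogues.

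To close the argument I would expand $(ah)(cg)$ in $E$: rewrite $hc$ via the first identity, push the resulting $A$-factors past $a$, then rewrite $(h_{(2)}\triangleleft c_{(2)})\,g$ via the second identity, and finally unfold the inner coproducts using that $\triangleleft\colon H\otimes A\to H$ is a coalgebra map. The result matches $u\bigl((a\ltimes h)\bullet(c\ltimes g)\bigr)$ term by term, so $u$ is multiplicative. Being also a unital bijective coalgebra map, $u$ transports the bialgebra structure of $E$ onto $A\otimes H$ equipped with multiplication \equref{10}, unit $1_A\otimes 1_H$ and tensor coproduct; invoking \cite[Theorem 2.4]{am3} then forces $\Omega(A)=(H,\triangleleft,\triangleright,f)$ to be a bialgebra extending structure of $A$ and $u\colon A\ltimes H\to E$ to be an isomorphism of bialgebras. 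The only delicate point is the Sweedler bookkeeping in the multiplicativity step, where the coalgebra-map property of $\triangleleft$ must be used to generate the triple indices on $h$ and $c$ appearing in \equref{10}.
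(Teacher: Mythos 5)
Your argument is correct and is essentially the transport-of-structure proof of this factorization theorem given in \cite{am3} (the present paper only recalls the statement without proof): the decomposition of a coalgebra map into the tensor coalgebra $A\otimes H$ via its two components, the resulting identities $ha=(h_{(1)}\triangleright a_{(1)})(h_{(2)}\triangleleft a_{(2)})$ and $hg=f(h_{(1)},g_{(1)})(h_{(2)}\cdot g_{(2)})$, the Sweedler expansion of $(ah)(cg)$ using that $\triangleleft$ is a coalgebra map, and the final appeal to \cite[Theorem 2.4]{am3} are exactly the ingredients used there. The only slip is the auxiliary formula $\mu(1_H\otimes a)=1_A\otimes a$, which should read $\mu(1_H\otimes a)=a\otimes 1_H$ (while $\mu(h\otimes 1_A)=1_A\otimes h$ is as you wrote); the normalization conditions \equref{2} and \equref{3} still follow exactly as you indicate.
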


\section{Unifying products versus split extensions of Hopf algebras}
For any bialgebra map $i: A\to E$, $E$ will be viewed as a left
$A$-module via $i$, that is $a\cdot x = i(a) x$, for all $a\in A$
and $x\in E$. We shall adopt a definition from \cite{AD} due in
the context of Hopf algebra maps.

\begin{definition}
Let $A$ and $E$ be two bialgebras. A coalgebra
map $\pi : E \to A$ is called \emph{normal} if the space
$$
\{ x \in E \, | \, \pi (x_{(1)}) \ot x_{(2)} = 1_A \ot x \}
$$
is a subcoalgebra of $E$.
\end{definition}

Let $G$ be a finite group, $H \leq G$ a subgroup of $G$ and
$k[G]^*$ the Hopf algebra of functions on $G$. Then the
restriction morphism $k[G]^* \to k[H]^*$ is a normal morphism if
and only if $H$ is a normal subgroup of $G$ (\cite{AD}).

The main properties of the bialgebra extension $ A \subset A
\ltimes H$ are given by the following:

\begin{proposition}\prlabel{3.1.1}
Let $A$ be a bialgebra, $\Omega(A) = \bigl(H, \triangleleft, \,
\triangleright, \, f \bigl)$ a bialgebra extending structure of
$A$ and the $k$-linear maps:
$$
i_A : A \to A  \ltimes H, \quad i_A (a) = a  \ltimes 1_H, \qquad
\pi_A : A  \ltimes H \to A, \quad \pi_A (a \ltimes h) =
\varepsilon_H (h) a
$$
for all $a \in A$, $h\in H$. Then:
\begin{enumerate}
\item $i_A$ is a bialgebra map, $\pi_A$ is a normal left
$A$-module coalgebra morphism and $\pi_A \circ i_A = {\rm Id}_A$.

\item $\pi_A$ is a right $A$-module map if and only if $\rhd$ is
the trivial action.

\item $\pi_A$ is a bialgebra map if and only if $\rhd$ and $f$ are
the trivial maps, i.e. the unified product $A \ltimes H = A \# H$,
the right version of the smash product of bialgebras.
\end{enumerate}
\end{proposition}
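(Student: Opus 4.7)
All three assertions reduce to unwinding the multiplication formula \equref{10} and exploiting the normalization axioms \equref{2}, \equref{3} together with the fact that $\triangleleft$, $\triangleright$, $f$ are coalgebra maps and $\Delta_H$, $\varepsilon_H$ are algebra maps. The equality $\pi_A \circ i_A = {\rm Id}_A$ is immediate from $\varepsilon_H(1_H) = 1$. The map $i_A$ is a coalgebra map because $\Delta_H(1_H) = 1_H \otimes 1_H$ and $\varepsilon_H(1_H) = 1$; it is an algebra map because a direct application of \equref{10} with $h = g = 1_H$ collapses via $1_H \triangleright b_{(1)} = b_{(1)}$, $1_H \triangleleft b_{(2)} = \varepsilon_A(b_{(2)}) 1_H$, $f(1_H, 1_H) = 1_A$ and $1_H \cdot 1_H = 1_H$ to $(a \ltimes 1_H) \bullet (b \ltimes 1_H) = ab \ltimes 1_H$. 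The map $\pi_A = \mathrm{Id}_A \otimes \varepsilon_H$ is trivially a coalgebra map since $A \ltimes H$ has the tensor product coalgebra structure \equref{11}, \equref{12}.

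For the left $A$-module property, I would compute $(a \ltimes 1_H) \bullet (b \ltimes h)$ through \equref{10}: using the same normalization reductions as above, this simplifies to $ab \ltimes h$, so $\pi_A(i_A(a) (b \ltimes h)) = \varepsilon_H(h) ab = a \cdot \pi_A(b \ltimes h)$. The key step is normality: I claim the subspace of coinvariants is precisely $1_A \ltimes H$. Indeed, for $x = a \ltimes h$ one has $\pi_A(x_{(1)}) \otimes x_{(2)} = a_{(1)} \otimes (a_{(2)} \ltimes h)$, and for a general element $x = \sum a_i \ltimes h_i$ (with $h_i$ linearly independent) the condition $\sum a_{i,(1)} \otimes a_{i,(2)} \otimes h_i = \sum 1_A \otimes a_i \otimes h_i$ forces $a_{i,(1)} \otimes a_{i,(2)} = 1_A \otimes a_i$, hence (applying $\mathrm{Id} \otimes \varepsilon_A$) $a_i \in k \cdot 1_A$. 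Thus the coinvariants equal $1_A \ltimes H$, which is manifestly a subcoalgebra by \equref{11}.

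For part (2), I would dualize the previous calculation: computing $(b \ltimes h) \bullet (a \ltimes 1_H)$ via \equref{10} and using $f(h_{(2)} \triangleleft a_{(2)}, 1_H) = \varepsilon_H(h_{(2)})\varepsilon_A(a_{(2)}) 1_A$ together with the counit properties reduces the product to $b(h_{(1)} \triangleright a_{(1)}) \ltimes h_{(2)} \triangleleft a_{(2)}$. Applying $\pi_A$ yields $b(h \triangleright a)$, whereas $\pi_A(b \ltimes h) \cdot a = \varepsilon_H(h) b a$. The equivalence with triviality of $\triangleright$ follows by setting $b = 1_A$.

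For part (3), I would test multiplicativity of $\pi_A$. Applying $\pi_A$ to the general product \equref{10} and using $\varepsilon_H(h \cdot g) = \varepsilon_H(h)\varepsilon_H(g)$, $\varepsilon_H(h \triangleleft a) = \varepsilon_H(h)\varepsilon_A(a)$ and $\varepsilon_A \circ f = \varepsilon_H \otimes \varepsilon_H$ gives $\pi_A((a\ltimes h)\bullet(b\ltimes g)) = a(h_{(1)} \triangleright b_{(1)}) f(h_{(2)} \triangleleft b_{(2)}, g)$, while $\pi_A(a \ltimes h) \pi_A(b \ltimes g) = \varepsilon_H(h)\varepsilon_H(g) ab$. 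Specializing $a = 1_A$, $g = 1_H$ (and using $f(\,-\,, 1_H)$ normalization) forces $\triangleright$ trivial; then specializing $a = b = 1_A$ forces $f$ trivial. The converse is the smash product case of \exref{3exemple}(2), where multiplicativity is immediate. The combined forward/backward bookkeeping of Sweedler indices is the only real obstacle; no new structural idea is required.
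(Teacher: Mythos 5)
Your proposal is correct and follows essentially the same route as the paper: you identify the subspace $\{x \in A\ltimes H \mid \pi_A(x_{(1)})\otimes x_{(2)} = 1_A \otimes x\}$ with $1_A \ltimes H$ (the paper does this by applying $\varepsilon_A$ to the middle tensor factor rather than via linear independence of the $h_i$, but the idea is identical), verify the left $A$-module property from the collapsed product $(a\ltimes 1_H)\bullet(c\ltimes h) = ac\ltimes h$, and settle parts (2) and (3) -- which the paper leaves as ``straightforward computation'' -- by exactly the right specializations $b=1_A$, respectively $a=1_A, g=1_H$ and $a=b=1_A$. The only micro-addition worth making explicit is the one-line reverse inclusion $1_A\ltimes H \subseteq \{x \mid \pi_A(x_{(1)})\otimes x_{(2)} = 1_A\otimes x\}$, so that the subspace is literally all of $1_A\ltimes H$ and hence a subcoalgebra.
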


\begin{proof}
$(1)$ The fact that $i_A$ is a bialgebra map and $\pi_A$ is a
coalgebra map is straightforward. We show that $\pi_A$ is normal.
Indeed, the subspace
$$
\Bigl\{\sum_i a_i \ltimes h_i \in A  \ltimes H \,|\, \sum_i
a_{i_{(1)}} \varepsilon_H (h_{i_{(1)}}) \otimes a_{i_{(2)}}
\ltimes h_{i_{(2)}} = \sum_i 1_A \otimes a_i \ltimes h_i \Bigl\}
$$
is a subcoalgebra in $A \ltimes H$ since it can be identified with
$1_A \ltimes H$: more precisely, applying $\varepsilon_A$ on the
second position in the equality from the above subspace we obtain
that $ \sum_i a_i \ltimes h_i = 1_A \ltimes \sum_i \varepsilon_A
(a_i) h_i \in 1_A \ltimes H$, as needed. On the other hand
$$
\pi_A \bigl( a \cdot ( c \ltimes h) \bigl) = \pi_A \bigl( i_A (a)
\bullet ( c \ltimes h) \bigl) = \pi_A (a c \ltimes h) = ac \,
\varepsilon_H (h) = a \, \pi_A (c \ltimes h)
$$
for all $a$, $c\in A$ and $h\in H$, i.e. $\pi_A$ is a left
$A$-module map.

$(2)$ and $(3)$ are proven by a straightforward computation.
\end{proof}

The next Theorem gives the converse of \prref{3.1.1} $(1)$ and
the generalization of \cite[Theorem 4.1]{molnar}. Our proof is based on the
factorization \thref{2} and the fundamental theorem of
Hopf-modules.

\begin{theorem}\thlabel{3split}
Let $i: A\to E$ be a Hopf algebra morphism such that there exists
$\pi : E \to A$ a normal left $A$-module coalgebra morphism for
which $\pi \circ i = {\rm Id}_A$. Let
$$
H := \{ x \in E \, | \, \pi (x_{(1)}) \ot x_{(2)} = 1_A \ot x \}
$$
Then there exists a bialgebra extending structure $\Omega(A) =
\bigl(H, \triangleleft, \, \triangleright, \, f \bigl)$ of $A$,
where the multiplication on $H$, the actions $\rhd$, $\lhd$ and
the cocycle $f$ are given by the formulas:
\begin{eqnarray*}
h \cdot g &:=& i \Bigl ( S_A \bigl (  \pi ( h_{(1)} g_{(1)} )\bigl
) \Bigl)  h_{(2)} g_{(2)}, \quad \,\,\,\,\,\,\,\,\,\, f (h, g) := \pi (hg) \\
h \lhd a &:=& i \Bigl ( S_A \bigl (  \pi ( h_{(1)} i(a_{(1)})
)\bigl ) \Bigl)  h_{(2)} i(a_{(2)}), \quad h \rhd a := \pi \bigl(h
i(a) \bigl)
\end{eqnarray*}
for all $h$, $g\in H$, $a\in A$ such that
$$
\varphi : A \ltimes H \to E, \quad \varphi ( a \ltimes h) = i(a) h
$$
for all $a\in A$ and $h\in H$ is an isomorphism of Hopf algebras.
\end{theorem}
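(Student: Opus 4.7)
The plan is to reduce the theorem to the factorization result \thref{2} by making $E$ into a left $A$-Hopf module whose space of coinvariants is precisely $H$. Give $E$ the left $A$-module structure $a \cdot x := i(a)x$ and the left $A$-comodule structure $\rho(x) := \pi(x_{(1)}) \otimes x_{(2)}$; the latter is a well defined coaction because $\pi$ is a coalgebra map. The Hopf module compatibility $\rho(i(a) x) = a_{(1)} x_{<-1>} \otimes a_{(2)} \cdot x_{<0>}$ follows by expanding $\Delta_E(i(a)x)$ and invoking the fact that $\pi$ is a left $A$-module map, i.e. $\pi(i(a)y) = a \pi(y)$.

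By definition $H = E^{{\rm co}(A)}$, and clearly $1_E \in H$ since $\pi(1_E) = 1_A$; the normality hypothesis on $\pi$ is precisely the statement that $H$ is a subcoalgebra of $E$. The fundamental theorem of Hopf modules therefore gives a bijection $\varphi : A \otimes H \to E$, $a \otimes h \mapsto i(a)h$, whose inverse is $\varphi^{-1}(x) = \pi(x_{(1)}) \otimes i(S_A(\pi(x_{(2)})))\, x_{(3)}$. Also $i : A \to E$ is injective (because $\pi \circ i = {\rm Id}_A$), so $A$ may be viewed as a subbialgebra of $E$. Hence we are exactly in the setting of \thref{2}: a subbialgebra $A$, a subcoalgebra $H$ containing $1_E$, and a bijective multiplication map $A \otimes H \to E$. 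That theorem produces a bialgebra extending structure $\Omega(A) = (H, \lhd, \rhd, f)$ of $A$ for which $\varphi$ is a bialgebra isomorphism $A \ltimes H \to E$; since $E$ has an antipode, the structure transported through $\varphi$ makes $A \ltimes H$ into a Hopf algebra as well, so $\varphi$ is an isomorphism of Hopf algebras.

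To recover the explicit formulas in the statement, it suffices to apply the recipes \equref{prima}--\equref{23} to $\mu(h \otimes a) = \varphi^{-1}(hi(a))$ and $\nu(h \otimes g) = \varphi^{-1}(hg)$. Substituting the expression for $\varphi^{-1}$ provided by the fundamental theorem and simplifying with the identities $\varepsilon_A \circ S_A = \varepsilon_A$ and $\varepsilon_A \circ \pi = \varepsilon_E$ yields exactly the four formulas for $\cdot$, $f$, $\lhd$, $\rhd$ displayed in the conclusion.

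The main obstacle is the conceptual step of recognizing that $(E, \cdot, \rho)$ is a Hopf module and that $H = E^{{\rm co}(A)}$ is a subcoalgebra: the first packages the interaction between the module and coalgebra hypotheses on $\pi$, while the second is the content of normality. Once these two points are established, the fundamental theorem of Hopf modules together with \thref{2} carry out all remaining structural work, and the explicit formulas in the statement emerge from a routine Sweedler-notation calculation.
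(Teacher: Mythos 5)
Your proposal is correct and follows essentially the same route as the paper's own proof: endowing $E$ with the left $A$-Hopf module structure $a\cdot x = i(a)x$, $\rho(x) = \pi(x_{(1)})\otimes x_{(2)}$, identifying $H = E^{{\rm co}(A)}$ as a subcoalgebra via normality, invoking the fundamental theorem of Hopf modules, and then reading off the extending structure from \thref{2} using the explicit inverse of $\varphi$. The derivation of the four formulas from \equref{prima}--\equref{23} is exactly the computation carried out in the paper.
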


\begin{proof}
There are two possible ways to prove the above theorem: the first
one is to give a standard proof in the way this type of theorems
are usually proved in Hopf algebra theory, by showing that all
compatibility conditions $(BE1) - (BE7)$ hold and then to prove
that $\varphi$ is an isomorphism of Hopf algebras. We prefer
however a more direct approach which relies on \thref{2}: it has
the advantage of making more transparent to the reader the way we
obtained the formulas which define the bialgebra extending
structure $\Omega(A)$. First we observe that $1_E \in H$ since
$\pi (1_E) = \pi (i (1_A)) = 1_A$ and $H$ is a subcoalgebra of $E$
as $\pi$ is normal. $E$ has a structure of left-left $A$-Hopf
module via the left $A$-action and the left $A$-coaction given by
$$
a \cdot x := i(a)x , \quad \rho (x) = x_{<-1>} \ot x_{<0>} := \pi
(x_{(1)}) \ot x_{(2)}
$$
for all $a\in A$ and $x\in E$. Indeed let us prove the Hopf module
compatibility condition:
\begin{eqnarray*}
\rho (a\cdot x) &=& \rho \bigl( i(a) x \bigl) = \pi \bigl(
i(a_{(1)}) x_{(1)} \bigl) \ot i(a_{(2)}) x_{(2)}\\
&=& \pi \bigl( a_{(1)} \cdot x_{(1)} \bigl) \ot i(a_{(2)}) x_{(2)}
= a_{(1)} \pi ( x_{(1)}) \ot a_{(2)} \cdot x_{(2)} \\
&=& a_{(1)} x_{<-1>} \ot a_{(2)} \cdot x_{<0>}
\end{eqnarray*}
for all $a\in A$ and $x\in E$, i.e. $E$ is a left-left $A$-Hopf
module. We also note that $ H = E ^{{\rm co} (A)}$. It follows
from the fundamental theorem of Hopf modules that the map
$$
\varphi : A \otimes H \to E, \quad \varphi ( a \otimes x) :=
a\cdot x = i(a) x
$$
for all $a\in A$ and $h\in H$ is an isomorphism of vector spaces
with the inverse given by
\begin{equation}\eqlabel{3.2.22}
\varphi^{-1} (x) := x_{<-2>} \ot S_A (x_{<-1>}) \cdot x_{<0>} =
\pi (x_{(1)}) \ot i \Bigl ( S_A \bigl (  \pi ( x_{(2)} )\bigl )
\Bigl ) x_{(3)}
\end{equation}
for all $x\in E$. Thus $E$ is a Hopf algebra that factorizes
through $i(A)\cong A$ and the subcoalgebra $H$. Now, from
\thref{2} we obtain that there exists a bialgebra extending
structure $\Omega(A) = \bigl(H, \triangleleft, \triangleright, f
\bigl)$ of $A$ such that $ \varphi : A \ltimes H \to E$, $\varphi
( a \ltimes h) = i(a) h$, for all $a\in A$ and $h\in H$ is an
isomorphism of Hopf algebras. Using \equref{3.2.22} the actions
$\rhd$, $\lhd$ given by the formulas \equref{prima},
\equref{adoua} take the explicit form:
\begin{eqnarray*}
h \rhd a &=& (Id_A \ot \varepsilon_H) \varphi^{-1} (h i(a) ) \\
&=& \pi \bigl(h_{(1)} i(a_{(1)}) \bigl) \varepsilon_H (h_{(2)})
\varepsilon_A (a_{(2)}) \varepsilon_H (h_{(3)}) \varepsilon_A
(a_{(3)}) = \pi \bigl(h i(a) \bigl)
\end{eqnarray*}
and
\begin{eqnarray*}
h \lhd a &=& (\varepsilon_A \ot Id_E ) \varphi^{-1} (h i(a) ) \\
&=& i \Bigl ( S_A \bigl (  \pi ( h_{(1)} i(a_{(1)}) )\bigl )
\Bigl)  h_{(2)} i(a_{(2)})
\end{eqnarray*}
for all $h\in H$ and $a\in A$. Finally, using once again
\equref{3.2.22}, the multiplication $\cdot$ on $H$ and the cocycle
$f$ given by \equref{22}, \equref{23} take the form
$$
h \cdot g := i \Bigl ( S_A \bigl (  \pi ( h_{(1)} g_{(1)} )\bigl )
\Bigl)  h_{(2)} g_{(2)}, \quad f (h, g) := \pi (hg)
$$
for all $h$, $g\in H$. The proof is now finished by \thref{2}.
\end{proof}

The next corollary covers the case in which the splitting map
$\pi$ is also a right $A$-module map. The version of
\coref{bimodsplit} below in which the normality condition of the
splitting map $\pi$ is dropped was proved in \cite[Theorem
3.64]{AMSt}. In this case, the input data of the construction of
the product that is used is called a dual Yetter-Drinfel'd
quadruple \cite[Definiton 3.59]{AMSt}, and consists of a system of
objects and maps satisfying eleven compatibility conditions.

\begin{corollary}\colabel{bimodsplit}
Let $i: A\to E$ be a Hopf algebra morphism such that there exists
$\pi : E \to A$ a normal $A$-bimodule coalgebra morphism such that
$\pi \circ i = {\rm Id}_A$. Then there exists $\Omega (A) = (H,
\lhd, f)$ a twisted bialgebra extending structure of $A$ such that
$$
\varphi : A \lozenge H \to E, \quad \varphi (a \lozenge h) = i(a)h
$$
for all $a\in A$ and $h\in H$ is an isomorphism of Hopf algebras.
\end{corollary}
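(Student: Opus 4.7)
The strategy is to bootstrap directly off \thref{3split} and then use the extra right $A$-module hypothesis on $\pi$ to collapse the action $\rhd$ to the trivial one. Since an $A$-bimodule coalgebra map is in particular a normal left $A$-module coalgebra map, \thref{3split} applies immediately and produces a bialgebra extending structure $\Omega(A) = (H, \lhd, \rhd, f)$ of $A$ together with the desired Hopf algebra isomorphism $\varphi : A \ltimes H \to E$, $\varphi(a \ltimes h) = i(a)h$. What remains is to verify that under the additional assumption that $\pi$ is also a right $A$-module map, the action $\rhd$ computed in \thref{3split} is trivial, so that $A \ltimes H$ is in fact the twisted product $A \lozenge H$ (\exref{3exemple}(1)).

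First I would record the following elementary fact about elements of $H$: applying $\varepsilon_E$ on the second tensor factor of the defining relation $\pi(h_{(1)}) \ot h_{(2)} = 1_A \ot h$ yields $\pi(h) = \varepsilon_H(h)\, 1_A$, for every $h\in H$. Next, by \thref{3split} the action $\rhd$ is given by $h \rhd a = \pi(h\,i(a))$. Since $\pi$ is a right $A$-module map (with $E$ acted on by $A$ through $i$, i.e.\ $x\cdot a = x\, i(a)$, and $A$ acted on by right multiplication), we have $\pi(h\, i(a)) = \pi(h)\, a$, and combining this with the previous observation gives
\[
h \rhd a = \pi(h)\, a = \varepsilon_H(h)\, 1_A\, a = \varepsilon_H(h)\, a,
\]
for all $h\in H$ and $a\in A$. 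Thus $\rhd$ is the trivial action.

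Consequently, by \prref{3.1.1}(2) combined with \exref{3exemple}(1), $\Omega(A) = (H, \lhd, f)$ is a twisted bialgebra extending structure of $A$, the unified product $A \ltimes H$ coincides with the twisted product $A \lozenge H$, and the isomorphism $\varphi$ provided by \thref{3split} is the required Hopf algebra isomorphism $A \lozenge H \cong E$. There is no genuine obstacle here: the entire argument is a short reduction to \thref{3split}, the only nontrivial point being the two-line computation showing that the right $A$-linearity of $\pi$ kills the action $\rhd$.
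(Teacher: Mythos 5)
Your proof is correct and follows essentially the same route as the paper: apply \thref{3split}, observe that $\pi(h)=\varepsilon_H(h)1_A$ for $h\in H$, and use right $A$-linearity of $\pi$ to compute $h\rhd a=\pi(h\,i(a))=\pi(h)a=\varepsilon_H(h)a$, so the unified product is the twisted product $A\lozenge H$. The appeal to \prref{3.1.1}(2) at the end is unnecessary but harmless.
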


\begin{proof} It follows from \thref{3split} that there
exists a bialgebra extending structure $\Omega(A) = \bigl(H,
\triangleleft, \, \triangleright, \, f \bigl)$ of $A$ such that
$$
\varphi : A \ltimes H \to E, \quad \varphi ( a \ltimes h) = i(a) h
$$
for all $a\in A$ and $h\in H$ is an isomorphism of Hopf algebras.
We observe that for any $h \in H = \{ x \in E \, | \, \pi
(x_{(1)}) \ot x_{(2)} = 1_A \ot x \}$, we have that $\pi (h) =
\varepsilon_H(h) 1_A$. Thus, as $\pi$ is also a right $A$-module
map, the action $\rhd$ is given by:
$$
h \rhd a = \pi \bigl(h i(a) \bigl) = \pi (h \cdot a) = \pi (h) a =
\varepsilon_H(h) a
$$
for all $h\in H$ and $a\in A$. So the action $\rhd$ is trivial and
hence the unified product $A \ltimes H$ is a twisted product $A
\lozenge H$.
\end{proof}

The following is a simplified and more transparent version of
\cite[Theorem 4.1]{molnar}. We use a minimal context: only the
concept of normality of a morphism in the sense of \cite{AD} is
used,  contrary to \cite[Definition 3.5]{molnar} where it was
taken as input data for \cite[Theorem 4.1]{molnar}.

\begin{corollary}\colabel{molnar}
Let $\pi : E \to A$ be a normal split epimorphism of Hopf
algebras. Then there exists an isomorphism of Hopf algebras $ E
\cong A \# H$, for some right $A$-module bialgebra $H$, where $A
\# H$ is the right version of the smash product of bialgebras.
\end{corollary}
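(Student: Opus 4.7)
The plan is to reduce the statement to \coref{bimodsplit} by showing that the hypotheses are strictly stronger, and then to verify that in this sharper setting the cocycle in the resulting twisted extending structure is forced to be trivial, so the twisted product collapses to the smash product described in \exref{3exemple}~(2).

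First I would record the input more explicitly: since $\pi : E \to A$ is a split epimorphism in the category of Hopf algebras, there exists a Hopf algebra map $i : A \to E$ with $\pi \circ i = \Id_A$. View $E$ as an $A$-bimodule via $i$, i.e.\ $a \cdot x = i(a) x$ and $x \cdot a = x\, i(a)$. Because $\pi$ is an algebra map, the computation $\pi(i(a)\, x\, i(b)) = a\, \pi(x)\, b$ shows that $\pi$ is automatically a morphism of $A$-bimodules; it is also a coalgebra map and is normal by assumption. The hypotheses of \coref{bimodsplit} are therefore met, producing a twisted bialgebra extending structure $\Omega(A) = (H, \lhd, f)$ with $H = \{x \in E \mid \pi(x_{(1)}) \ot x_{(2)} = 1_A \ot x\}$, and an isomorphism of Hopf algebras $\varphi : A \lozenge H \to E$.

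Next I would show that $f$ is trivial. For any $h \in H$, applying $\Id_A \ot \varepsilon_E$ to the defining identity $\pi(h_{(1)}) \ot h_{(2)} = 1_A \ot h$ gives $\pi(h) = \varepsilon_H(h) 1_A$, where $\varepsilon_H = \varepsilon_E|_H$. Using the explicit formula $f(h,g) = \pi(hg)$ from \thref{3split} together with the fact that $\pi$ is an algebra map, I get
\[
f(h,g) \;=\; \pi(hg) \;=\; \pi(h)\, \pi(g) \;=\; \varepsilon_H(h)\, \varepsilon_H(g)\, 1_A,
\]
which is exactly the trivial cocycle. Since the action $\rhd$ was already trivial (we are inside a twisted product) and now $f$ is trivial as well, \exref{3exemple}~(2) applies: the bialgebra extending structure reduces to a right $A$-module bialgebra structure on $H$, and the associated unified product coincides with the right smash product $A \# H$. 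Composing with $\varphi$ yields the desired Hopf algebra isomorphism $E \cong A \# H$.

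The argument is essentially bookkeeping on top of \thref{3split} and \coref{bimodsplit}, so there is no genuine obstacle; the only point requiring a moment of care is checking that $\pi(h) = \varepsilon_H(h) 1_A$ for $h \in H$, which is what turns the multiplicativity of $\pi$ into triviality of the cocycle $f$.
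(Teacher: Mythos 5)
Your proof is correct and follows essentially the same route as the paper: both reduce to \thref{3split} (you via \coref{bimodsplit}, the paper citing it only for triviality of $\rhd$), then use $\pi(h)=\varepsilon_H(h)1_A$ on $H$ together with multiplicativity of $\pi$ to kill the cocycle $f(h,g)=\pi(hg)$, so the unified product collapses to the right smash product $A\# H$ of \exref{3exemple}~(2). The only cosmetic difference is that the paper also records that the resulting multiplication on $H$ is the one inherited from $E$ (so $H$ is a Hopf subalgebra), a point not needed for the statement itself.
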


\begin{proof}
Let $i : A \to E$ be a Hopf algebra map such that $\pi \circ i =
{\rm Id}_A$ and
$$
H = \{ x \in E \, | \, \pi (x_{(1)}) \ot x_{(2)} = 1_A \ot x \}
$$
which is a subcoalgebra in $E$ as $\pi$ is normal. In fact, as
$\pi$ is also an algebra map, $H$ is a Hopf subalgebra of $E$. We
note that $\pi (h) = \varepsilon_H(h) 1_A$, for all $h\in H$.
$\pi$ is also a $A$-bimodule map as $\pi (a \cdot h) = \pi (i (a)
h) = \pi (i (a)) \pi (h) = a \pi (h)$ and $\pi (h \cdot a) = \pi
(h i(a)) = \pi (h) a$, for all $a\in A$ and $h\in H$. Now we can
apply \thref{3split}. Thus, there exists a bialgebra extending
structure $\Omega(A) = \bigl(H, \triangleleft, \, \triangleright,
\, f \bigl)$ of $A$ such that
$$
\varphi : A \ltimes H \to E, \quad \varphi ( a \ltimes h) = i(a) h
$$
is an isomorphism of bialgebras. Moreover the action $\rhd$ is the
trivial one as $\pi$ is a right $A$-module map
(\coref{bimodsplit}). On the other hand, the cocycle $f$ as it was
defined in \thref{3split} is also the trivial one: $f (h, g) = \pi
(h g) = \pi (h) \pi (g) = \varepsilon_H(h) \varepsilon_H(g) 1_A$,
for all $h$, $g\in H$. Using once again the fact that $\pi$ is an
algebra map, the multiplication $\cdot$ on $H$ as it was defined
in \thref{3split} is exactly the one of $E$ (and this fits with
the fact that $H$ is a Hopf subalgebra of $E$). Finally, the right
action as it was defined in \thref{3split} takes the simplified
form: $h \lhd a = i \bigl( S_A (a_{(1)}) \bigl) \, h \, a_{(1)}$,
for all $h\in H$ and $a\in A$. Thus, the unified product $A
\ltimes H$ is the right version of the smash product of Hopf
algebras from \exref{3exemple} and $\varphi : A \# H \to E$ is an
isomorphism of Hopf algebras.
\end{proof}

From now on, $A$ will be a Hopf algebra and $\Omega(A) = \bigl(H,
\triangleleft, \, \triangleright, \, f \bigl)$ a bialgebra
extending structure of $A$. Let $i_A : A \to A  \ltimes H$, $i_A
(a) = a \ltimes 1_H$, for all $a\in A$ be the canonical bialgebra
morphism.

\begin{proposition}\prlabel{splitmono1}
Let $A$ be a Hopf algebra and $\Omega(A) = \bigl(H, \triangleleft,
\, \triangleright, \, f \bigl)$ a bialgebra extending structure of
$A$. Then $i_A : A \to A  \ltimes H$ is a split monomorphism in
the category of bialgebras if and only if there exists $\gamma: H
\rightarrow A$ a unitary coalgebra map such that
\begin{eqnarray}
h\triangleright a &=& \gamma(h_{(1)}) \, a_{(1)} \,
\gamma^{-1}(h_{(2)}\lhd a_{(2)} ) \eqlabel{iner1}\\
f(h,\,g) &=& \gamma(h_{(1)}) \, \gamma(g_{(1)}) \,
\gamma^{-1}(h_{(2)} \cdot g_{(2)}) \eqlabel{iner2}
\end{eqnarray}
for all $h$, $g\in H$ and $a\in A$, where $\gamma^{-1} = S_A \circ
\gamma$.
\end{proposition}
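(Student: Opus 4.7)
The plan is to establish the equivalence by the bijective correspondence $\gamma \leftrightarrow p$ given by $\gamma(h) := p(1_A \ltimes h)$ and $p(a \ltimes h) := a\gamma(h)$; these two formulas are consistent in the presence of multiplicativity, since $(a \ltimes h) = (a \ltimes 1_H) \bullet (1_A \ltimes h)$ in $A \ltimes H$ by a short computation from \equref{10} and the normalization axioms \equref{2}--\equref{3}.

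For the direction ($\Rightarrow$), given a bialgebra retraction $p$ of $i_A$, set $\gamma := p \circ (1_A \ltimes -)$. This is a unitary coalgebra map since both $h \mapsto 1_A \ltimes h$ (by \equref{11}--\equref{12}) and $p$ preserve coproducts, counits and units. To derive \equref{iner1}, compute from \equref{10}, \equref{2}, \equref{3}:
$$
(1_A \ltimes h) \bullet (a \ltimes 1_H) = (h_{(1)} \triangleright a_{(1)}) \ltimes (h_{(2)} \triangleleft a_{(2)}),
$$
and apply the algebra map $p$ to obtain $\gamma(h) a = (h_{(1)} \triangleright a_{(1)}) \gamma(h_{(2)} \triangleleft a_{(2)})$. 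Re-indexing $(h, a) \mapsto (h_{(1)}, a_{(1)})$ and right-multiplying by $\gamma^{-1}(h_{(2)} \triangleleft a_{(2)})$, the trailing pair $\gamma(h_{(2)} \triangleleft a_{(2)}) \gamma^{-1}(h_{(3)} \triangleleft a_{(3)})$ collapses to $\varepsilon_H(h_{(2)}) \varepsilon_A(a_{(2)}) 1_A$, because $\triangleleft$ is a coalgebra map and $\gamma^{-1} := S_A \circ \gamma$ is the convolution inverse of $\gamma$; this reduces the right-hand side to $h \triangleright a$, proving \equref{iner1}. Condition \equref{iner2} is derived identically from $(1_A \ltimes h) \bullet (1_A \ltimes g) = f(h_{(1)}, g_{(1)}) \ltimes h_{(2)} \cdot g_{(2)}$, using that $\cdot : H \otimes H \to H$ is a coalgebra map since $\Delta_H$ is an algebra map.

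For the direction ($\Leftarrow$), given a unitary coalgebra map $\gamma$ satisfying \equref{iner1} and \equref{iner2}, define $p(a \ltimes h) := a\gamma(h)$. The retraction identity $p \circ i_A = \mathrm{Id}_A$ follows from $\gamma(1_H) = 1_A$, and $p$ is a coalgebra map as the composition of $\mathrm{Id}_A \otimes \gamma$ with the multiplication of $A$ (both coalgebra maps, the latter because $A$ is a bialgebra). The main obstacle is the multiplicativity of $p$: unfolding $p\bigl((a \ltimes h) \bullet (c \ltimes g)\bigr)$ via \equref{10} and substituting \equref{iner1} for $h_{(1)} \triangleright c_{(1)}$ and \equref{iner2} for $f(h_{(2)} \triangleleft c_{(2)}, g_{(1)})$ produces an expression involving up to five Sweedler factors each of $h$ and $c$. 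I expect this to collapse to $a\gamma(h) c \gamma(g) = p(a \ltimes h) p(c \ltimes g)$ after exactly two applications of the telescoping identity $\gamma(x_{(1)}) \gamma^{-1}(x_{(2)}) = \varepsilon_H(x) 1_A$ --- the first cancelling a $\gamma^{-1} \gamma$ pair induced by $\triangleleft$ (as in the forward direction), the second cancelling a pair wrapped around the product $\cdot$ on $H$ --- with the freed Sweedler factors absorbed by counit applications.
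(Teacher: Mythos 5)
Your proposal is correct and follows the same strategy as the paper: both arguments rest on the correspondence $p \leftrightarrow \gamma$, with $\gamma(h)=p(1_A\ltimes h)$ and $p(a\ltimes h)=a\gamma(h)$ (legitimate because $a\ltimes h=(a\ltimes 1_H)\bullet(1_A\ltimes h)$), and on translating the algebra-map property of $p$ into the identities $\gamma(h)a=(h_{(1)}\rhd a_{(1)})\,\gamma(h_{(2)}\lhd a_{(2)})$ and $\gamma(h)\gamma(g)=f(h_{(1)},g_{(1)})\,\gamma(h_{(2)}\cdot g_{(2)})$, which become \equref{iner1}--\equref{iner2} after convolving with $\gamma^{-1}=S_A\circ\gamma$. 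The one place you genuinely diverge is the converse implication: the paper reduces multiplicativity of $p_\gamma$ to a check on the generating set $T=\{a\ltimes 1_H\}\cup\{1_A\ltimes g\}$, citing \cite{am3}, whereas you verify $p_\gamma\bigl((a\ltimes h)\bullet(c\ltimes g)\bigr)=a\gamma(h)c\gamma(g)$ directly from \equref{10}. Your route is a bit longer in Sweedler indices but self-contained, and it sidesteps the point, left implicit in the paper, that multiplicativity on pairs of generators propagates to arbitrary elements. Your sketched collapse does work exactly as described: after substituting \equref{iner1} and \equref{iner2} one meets the pair $\gamma^{-1}(h_{(2)}\lhd c_{(2)})\,\gamma(h_{(3)}\lhd c_{(3)})$ and, once it has collapsed, the pair $\gamma^{-1}\bigl((h_{(2)}\lhd c_{(2)})\cdot g_{(2)}\bigr)\,\gamma\bigl((h_{(3)}\lhd c_{(3)})\cdot g_{(3)}\bigr)$; both reduce to counits because $\lhd$ and $\cdot$ are coalgebra maps and $S_A\circ\gamma$ is a two-sided convolution inverse of the coalgebra map $\gamma$ (so the identity you actually need here is $\gamma^{-1}(x_{(1)})\gamma(x_{(2)})=\varepsilon_H(x)1_A$, the mirror of the one you quoted, which holds for the same reason).
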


\begin{proof}
$i_{A}: A \to A  \ltimes H$ is a split monomorphism of bialgebras
if and only if there exists a bialgebra map $p: A \ltimes H
\rightarrow A$ such that $p (a \ltimes 1) = a$, for all $a \in A$.
Such a Hopf algebra map $p$ is given by:
$$
p(a \ltimes h) = p\bigl((a \ltimes 1_H )\bullet (1_A \ltimes h)\bigl) =
a \, p(1_A \# h)
$$
for all $a\in A$ and $h\in H$. We denote by $\gamma: H \rightarrow
A$, $\gamma(h):= p(1 \ltimes h)$, for all $h\in H$. Hence, such a
splitting map $p$ should be given by
$$
p = p_{\gamma} : A \ltimes H \rightarrow A, \quad p_{\gamma} (a
\ltimes h) = a \gamma (h)
$$
for all $a\in A$ and $h\in H$. First we note that $p_{\gamma}$ is
a coalgebra map if and only if $\gamma$ is a coalgebra map. Now,
we prove that $p_{\gamma}$ is an algebra map if and only if
$\gamma(1_H) = 1_A$ and the following two compatibilities are
fulfilled:
\begin{eqnarray}
\eqlabel{s1} \gamma(h) a &{=}& \bigl(h_{(1)} \triangleright
a_{(1)} \bigl) \,
\gamma(h_{(2)} \lhd a_{(2)} )\\
\eqlabel{s2} \gamma(h)\gamma(g) &{=}& f(h_{(1)}, \, g_{(1)}) \,
\gamma(h_{(2)} \cdot g_{(2)})
\end{eqnarray}
for all $h$, $g\in H$ and $a\in A$. Of course, $p_{\gamma} (1_A
\ltimes 1_H) = 1_A$ if and only $\gamma (1_H) = 1_A$. We assume
now that $\gamma (1_H) = 1_A$. Then, $p_{\gamma}$ is an algebra
map if and only if $p_{\gamma}(xy) = p_{\gamma}(x) p_{\gamma}(y)$,
for all $x$, $y \in T := \{a \ltimes 1_{H} ~|~ a \in A\} \cup
\{1_{A} \ltimes g ~|~ g \in H\}$, the set of generators as an
algebra of $A\ltimes H$ (see \cite{am3}). Now, for any $a$, $c\in
A$ and $h\in H$ we have:
$$
p_{\gamma} \bigl( ( a \ltimes 1_H) \bullet (c \ltimes 1_H) \bigl) = a c =
p_{\gamma} (a \ltimes 1_H) p_{\gamma} (c \ltimes 1_H)
$$
and
$$
p_{\gamma} \bigl( ( a \ltimes 1_H) \bullet (1_A \ltimes h) \bigl) = a
\gamma (h) = p_{\gamma} (a \ltimes 1_H) p_{\gamma} (1_A \ltimes h)
$$
On the other hand, it is straightforward to see that
$$
p_{\gamma} \bigl( ( 1_A \ltimes h) \bullet (a \ltimes 1_H) \bigl) =
p_{\gamma} ( 1_A \ltimes h) p_{\gamma} ( a \ltimes 1_H)
$$
if and only if \equref{s1} holds and
$$
p_{\gamma} \bigl( ( 1_A \ltimes h) \bullet (1_A \ltimes g) \bigl) =
p_{\gamma} ( 1_A \ltimes h) p_{\gamma} ( 1_A \ltimes g)
$$
if and only if \equref{s2} holds. Thus, we have proved that
$p_{\gamma}$ is a  bialgebra map if and only if $\gamma : H \to A$
is a unitary coalgebra map and the compatibility conditions
\equref{s1} and \equref{s2} are fulfilled. Being a coalgebra map,
$\gamma$ is invertible in convolution with the inverse given by
$\gamma^{-1} = S_A \circ \gamma$. We observe that \equref{s1} is
equivalent to \equref{iner1} while \equref{s2} is equivalent to
\equref{iner2} and the proof is finished.
\end{proof}

Suppose now that we are in the setting of \prref{splitmono1}. The
multiplication on the unified product $A \ltimes H$ given by
\equref{10} takes the following form
\begin{equation}\eqlabel{10a}
(a \ltimes h)\bullet(c \ltimes g) = a \gamma (h_{(1)}) c_{(1)}
\gamma (g_{(1)}) \gamma^{-1} \Bigl ( (h_{(2)} \lhd c_{(2)} ) \cdot
g_{(2)} \Bigl) \ltimes (h_{(3)} \lhd c_{(3)} ) \cdot g_{(3)}
\end{equation}
for all $a$, $c\in A$ and $h$, $g\in H$, which is still very
difficult to deal with. Next we shall give another equivalent
description of the bialgebra structure on this special unified
product in which the multiplication has a less complicated form.
Let $A \circledast H = A \ot H$, as a $k$-module with the unit
$1_A \circledast 1_H$ and the following structures:
\begin{eqnarray}
(a \circledast h)\cdot (c \circledast g) &:=& a c_{(1)}
\circledast \Bigl ( h \lhd \bigl( c_{(2)} \gamma^{-1} (g_{(1)}
\bigl) \Bigl) \cdot
g_{(2)} \eqlabel{10b} \\
\Delta_{A \circledast H} \, (a  \circledast h) &:=& a_{(1)}
\circledast h_{(2)} \ot a_{(2)} \gamma^{-1} (h_{(1)}) \gamma
(h_{(3)}) \circledast h_{(4)} \eqlabel{10c}\\
\varepsilon_{A \circledast H} \, (a  \circledast h) &:=&
\varepsilon_A (a) \varepsilon_H (h) \eqlabel{10d}
\end{eqnarray}
for all $a$, $c\in A$ and $h$, $g\in H$, where we denoted $a
\otimes h \in A \otimes H$ by $a \circledast h$.

The object $A \circledast H$ introduced above is an
interesting deformation of the smash product of bialgebras $A\# H$ of
\exref{3exemple} that can be recovered in the case that $\gamma : H \to A$
is the trivial map, that is $\gamma (h) = \varepsilon_H (h) 1_A$,
for all $h\in H$. If $H$ is cocommutative then the comultiplication
given by \equref{10c} is just the tensor product of coalgebras.

\begin{proposition}\prlabel{produsulciudat}
Let $\Omega(A) = \bigl(H, \triangleleft, \, \triangleright, \, f
\bigl)$ be a bialgebra extending structure of a Hopf algebra $A$
and $\gamma : H \to A$ be a unitary coalgebra map such that
\equref{iner1} and \equref{iner2} hold. Then
$$
\varphi : A \ltimes H \to A \circledast H, \quad \varphi ( a
\ltimes h) := a \gamma (h_{(1)}) \circledast h_{(2)}
$$
for all $a\in A$ and $h\in H$ is an isomorphism of bialgebras.
\end{proposition}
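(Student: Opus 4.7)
The plan is to show that $\varphi$ is a linear bijection that transports the bialgebra structure on $A \ltimes H$ onto the data $\equref{10b}$--$\equref{10d}$ defining $A \circledast H$. No separate verification of bialgebra axioms for $A \circledast H$ is required: they follow from this transport, and simultaneously $\varphi$ is a bialgebra isomorphism. Bijectivity is established via the explicit inverse
\[
\psi : A \circledast H \to A \ltimes H, \qquad \psi (a \circledast h) := a \gamma^{-1}(h_{(1)}) \ltimes h_{(2)},
\]
both compositions reducing to the convolution identities $\gamma * \gamma^{-1} = \gamma^{-1} * \gamma = \varepsilon_H(\cdot)\, 1_A$ guaranteed by $\gamma^{-1} = S_A \circ \gamma$.

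The easy bialgebra checks come first. Unit preservation $\varphi(1_A \ltimes 1_H) = 1_A \circledast 1_H$ follows from $\gamma(1_H) = 1_A$, and counit preservation from $\varepsilon_A \circ \gamma = \varepsilon_H$. For the coproduct, I would expand $\Delta_{A \circledast H}\bigl(\varphi(a \ltimes h)\bigr)$ using $\equref{10c}$ together with $\Delta_A \circ \gamma = (\gamma \ot \gamma)\Delta_H$. The resulting expression contains an adjacent interior pair $\gamma(h_{(i)})\gamma^{-1}(h_{(i+1)})$ which collapses via the counit, yielding exactly $(\varphi \ot \varphi)\Delta_{A \ltimes H}(a \ltimes h)$; this is transparent since $\Delta_{A \ltimes H}$ is just the tensor product of coalgebras.

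The core of the proof is showing that $\varphi$ is multiplicative. Starting from $\equref{10a}$ and applying $\varphi$, I would use that both the product $\cdot$ on $H$ and the action $\lhd$ are coalgebra maps to conclude that the coproduct of $(h_{(3)}\lhd c_{(3)})\cdot g_{(3)}$ distributes as $\bigl((h_{(3)}\lhd c_{(3)})\cdot g_{(3)}\bigr) \ot \bigl((h_{(4)}\lhd c_{(4)})\cdot g_{(4)}\bigr)$. The interior pair $\gamma^{-1}\bigl((h_{(2)}\lhd c_{(2)})\cdot g_{(2)}\bigr)\gamma\bigl((h_{(3)}\lhd c_{(3)})\cdot g_{(3)}\bigr)$ thereby becomes a convolution $\gamma^{-1} * \gamma$ that collapses to $\varepsilon$, and after re-indexing one obtains
\[
\varphi\bigl((a \ltimes h)\bullet(c \ltimes g)\bigr) = a\gamma(h_{(1)}) c_{(1)} \gamma(g_{(1)}) \circledast (h_{(2)}\lhd c_{(2)})\cdot g_{(2)}.
\]
Dually, expanding $\varphi(a \ltimes h)\cdot \varphi(c \ltimes g)$ via $\equref{10b}$ and using $\Delta_A\bigl(c\gamma(g_{(1)})\bigr) = c_{(1)}\gamma(g_{(1)}) \ot c_{(2)}\gamma(g_{(2)})$, a symmetric interior $\gamma*\gamma^{-1}$ pair collapses and produces the same expression.

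The main obstacle is the Sweedler bookkeeping in this last step, where one must juggle iterated $\Delta$-expansions of $h$, $c$, and $g$ while pinpointing the adjacent factors that telescope. The argument rests on two structural features of the setup: the compatibility conditions in a bialgebra extending structure force $\cdot$ and $\lhd$ to be coalgebra maps so that $\Delta_H$ distributes through $(h\lhd c)\cdot g$; and $\gamma$ being a coalgebra map is precisely what makes $S_A \circ \gamma$ its two-sided convolution inverse, without which the telescoping collapses of \equref{iner1}--\equref{iner2} would not be available.
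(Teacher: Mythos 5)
Your proposal is correct and takes essentially the same route as the paper's proof: the same explicit inverse $a\circledast h\mapsto a\,\gamma^{-1}(h_{(1)})\ltimes h_{(2)}$, the same convolution collapses coming from $\gamma^{-1}=S_A\circ\gamma$ applied to \equref{10a} and \equref{10c}, and the same final transport argument that the bialgebra axioms for $A\circledast H$ follow because $A\ltimes H$ is a bialgebra. The only cosmetic difference is that the paper computes the pushed-forward product $\varphi\bigl(\varphi^{-1}(-)\bullet\varphi^{-1}(-)\bigr)$ and identifies it with \equref{10b}, whereas you check multiplicativity of $\varphi$ by evaluating both sides directly.
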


\begin{proof}
The map $\varphi : A \ltimes H \to A \ot H $ is bijective with the
inverse $ \varphi^{-1} : A \ot H \to A \ltimes H$ given by
$\varphi^{-1} (a \ot h) = a \gamma^{-1} (h_{(1)}) \ltimes
h_{(2)}$, for all $a \in A$ and $h \in H$, where $\gamma^{-1} =
S_A \circ \gamma$. Below we shall use the fact that $\gamma^{-1} :
H \to A$ is an antimorphism of coalgebras, i.e. $\Delta \bigl(
\gamma^{-1} (h) \bigl) = \gamma^{-1} (h_{(2)}) \ot \gamma^{-1}
(h_{(1)})$, for all $h\in H$. The unique algebra structure that
can be defined on the $k$-module $A \otimes H$ such that $\varphi
: A \ltimes H \to A \ot H$ becomes an isomorphism of algebras is
given by:
\begin{eqnarray*}
(a \ot h) \cdot (c \ot g) &=& \varphi \Bigl ( \varphi^{-1} (a\ot
h) \bullet \varphi^{-1} (c\ot h) \Bigl) \\
&=& \varphi \Bigl ( \bigl( a\gamma^{-1} (h_{(1)}) \ltimes h_{(2)}
\bigl ) \bullet \bigl( c\gamma^{-1} (g_{(1)}) \ltimes g_{(2)}
\bigl) \Bigl)\\
&\stackrel{\equref{10a}} =& \varphi \Bigl ( a \gamma^{-1}(h_{(1)})
\gamma (h_{(2)}) c_{(1)} \gamma^{-1}(g_{(3)}) \gamma (g_{(4)})\\
&& \gamma^{-1}  [ ( h_{(3)} \lhd (c_{(2)} \gamma^{-1}(g_{(2)}) ) )
\cdot g_{(5)}] \ltimes [ ( h_{(4)} \lhd (c_{(3)}
\gamma^{-1}(g_{(1)}) ) ) \cdot g_{(6)}] \Bigl)\\
&=& \varphi \Bigl ( a c_{(1)} \gamma^{-1}  [ ( h_{(1)} \lhd
(c_{(2)} \gamma^{-1}(g_{(2)}) ) ) \cdot g_{(3)}] \\
&&\ltimes ( h_{(2)} \lhd (c_{(3)} \gamma^{-1}(g_{(1)}) ) ) \cdot
g_{(4)} \Bigl)
\end{eqnarray*}
\begin{eqnarray*}
&=& a c_{(1)} \gamma^{-1}  [ ( h_{(1)} \lhd (c_{(2)}
\gamma^{-1}(g_{(3)}) ) ) \cdot g_{(4)}] \\
&& \gamma [ ( h_{(2)} \lhd (c_{(3)} \gamma^{-1}(g_{(2)}) ) ) \cdot
g_{(5)}] \ot ( h_{(3)} \lhd (c_{(4)} \gamma^{-1}(g_{(1)}) ) )
\cdot g_{(6)}\\
&=& a c_{(1)} \ot \Bigl ( h \lhd \bigl( c_{(2)} \gamma^{-1}
(g_{(1)} \bigl) \Bigl) \cdot g_{(2)}
\end{eqnarray*}
which is exactly the multiplication defined by \equref{10b} on $A
\circledast H = A \ot H$. Thus, we have proved that $\varphi : A
\ltimes H \to A \circledast H$ is an isomorphism of algebras.

It remains to prove that $\varphi : A \ltimes H \to A \circledast
H$ is also a coalgebra map. For any $a \in A$ and $h\in H$ we
have:
\begin{eqnarray*}
\Delta_{A \circledast H} \bigl( \varphi (a \ltimes h ) \bigl) &=&
\Delta_{A \circledast H} \bigl( a \gamma (h_{(1)}) \circledast
h_{(2)} \bigl) \\
&\stackrel{\equref{10c}} =&  a_{(1)} \gamma (h_{(1)}) \circledast
h_{(4)} \ot a_{(2)} \gamma (h_{(2)})
\gamma^{-1} (h_{(3)}) \gamma (h_{(5)}) \circledast h_{(6)}\\
&=& a_{(1)} \gamma (h_{(1)}) \circledast h_{(2)} \ot a_{(2)}
\gamma (h_{(3)}) \circledast h_{(4)}\\
&=& \varphi ( a_{(1)} \ltimes h_{(1)} ) \ot \varphi ( a_{(2)}
\ltimes h_{(2)} )
\end{eqnarray*}
i.e. $\varphi: A \ltimes H \to A \circledast H$ is a coalgebra
map, as needed. By assumption,  $A \ltimes H$ is a bialgebra, thus
$A \circledast H$ is a bialgebra and $\varphi$ is an isomorphism
of bialgebras.
\end{proof}

\prref{splitmono1} provides a method to construct bialgebra
extending structures, and thus unified products, starting only
with a right action $\lhd$ and a unitary coalgebra map $\gamma : H
\to A$.

\begin{theorem}\thlabel{construnifbirp}
Let $A$ be a Hopf algebra, $H$ a unitary not necessarily
associative bialgebra such that $(H, \lhd)$ is a right $A$-module
coalgebra with $1_A \triangleleft a = \varepsilon_A (a) 1_H$, for
all $a \in A$. Let $\gamma : H \to A$ be a unitary coalgebra map
and define
\begin{eqnarray}
\rhd_{\gamma} : H\ot A \to A, \quad h \rhd_{\gamma} a : &=&
\gamma(h_{(1)}) \, a_{(1)} \, \gamma^{-1}(h_{(2)}\lhd a_{(2)} ) \eqlabel{iner11}\\
f_{\gamma} : H\ot H \to A, \quad f_{\gamma} (h,\,g) : &=&
\gamma(h_{(1)}) \, \gamma(g_{(1)}) \, \gamma^{-1}(h_{(2)} \cdot
g_{(2)}) \eqlabel{iner22}
\end{eqnarray}
for all $h$, $g\in H$ and $a\in A$. Assume that the compatibility
conditions $(BE1)$, $(BE3)$, $(BE6)$ and $(BE7)$ hold for
$\rhd_{\gamma}$ and $f_{\gamma}$.

Then $\Omega(A) = (H, \triangleleft, \, \triangleright_{\gamma},
\, f_{\gamma} )$ is a bialgebra extending structure of $A$ and
there exists an isomorphism of bialgebras $A \ltimes H \cong L
\ast A$, where $L \ast A$ is the Radford biproduct for a bialgebra
$L$ in the category ${}_{A}^{A}{}{\mathcal YD}$ of
Yetter-Drinfel'd modules.
\end{theorem}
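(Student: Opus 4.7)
My plan is to prove the theorem in two stages: first showing that $\Omega(A)=(H, \lhd, \rhd_\gamma, f_\gamma)$ really is a bialgebra extending structure of $A$, and then producing the isomorphism $A\ltimes H \cong L \ast A$ by combining \prref{splitmono1} with Radford's biproduct theorem.

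For the first stage, by the characterization recalled in the preliminaries (see \cite[Theorem 2.4]{am3}), I need to verify the normalization conditions \equref{2}, \equref{3}, the coalgebra-map property of $\rhd_\gamma$ and $f_\gamma$, and the compatibilities $(BE1)$--$(BE7)$. By hypothesis $(BE1)$, $(BE3)$, $(BE6)$, $(BE7)$ already hold, and $H$ being a unitary bialgebra with $(H,\lhd)$ a right $A$-module coalgebra satisfying $1_H\lhd a=\varepsilon_A(a)1_H$ delivers the remaining module and coalgebra axioms on $H$. The normalization conditions \equref{2}, \equref{3} reduce by a routine computation to $\gamma(1_H)=1_A$ (whence $\gamma^{-1}(1_H)=1_A$), the convolution identity $\gamma^{-1}(h_{(1)})\gamma(h_{(2)})=\varepsilon_H(h)1_A=\gamma(h_{(1)})\gamma^{-1}(h_{(2)})$, and unitality of the multiplication on $H$. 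The remaining three compatibilities $(BE2)$, $(BE4)$, $(BE5)$ are automatic thanks to the inner form of $\rhd_\gamma$ and $f_\gamma$: both sides expand via \equref{iner11} and \equref{iner22}, and after inserting factors of the form $\gamma(x_{(1)})\gamma^{-1}(x_{(2)})=\varepsilon_H(x)1_A$ at well-chosen spots and using coassociativity together with the coalgebra-map properties of $\lhd$ and the multiplication on $H$, they collapse to a common expression (with $(BE2)$ needing only the right $A$-module axiom $(g\lhd a)\lhd b=g\lhd(ab)$, while $(BE4)$ and $(BE5)$ additionally invoke $(BE1)$).

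Once $\Omega(A)$ is known to be a bialgebra extending structure, the unified product $A\ltimes H$ is a bialgebra. Since $\rhd_\gamma$ and $f_\gamma$ satisfy \equref{iner1} and \equref{iner2} by construction, \prref{splitmono1} furnishes a bialgebra retraction $p_\gamma:A\ltimes H\to A$, $p_\gamma(a\ltimes h)=a\gamma(h)$, of the canonical bialgebra morphism $i_A$. Hence $i_A$ is a split monomorphism of bialgebras, and Radford's biproduct theorem \cite[Theorem 3]{radford} applied to the pair $(i_A,p_\gamma)$ produces a bialgebra $L$ in the pre-braided category ${}_{A}^{A}{\mathcal YD}$ together with an isomorphism of bialgebras $A\ltimes H\cong L\ast A$. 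Concretely, $L$ is realized as the space of right $A$-coinvariants of $A\ltimes H$ with respect to the right $A$-coaction induced by $p_\gamma$.

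The main obstacle is the first stage: even though $(BE4)$ and $(BE5)$ must follow from the inner nature of $\rhd_\gamma$ and $f_\gamma$, their verification produces long Sweedler expressions whose simplification to a common form requires carefully chosen insertions of $\gamma*\gamma^{-1}=\varepsilon_H 1_A$, repeated coassociativity, and the coalgebra-map property of both $\lhd$ and the (not necessarily associative) multiplication on $H$. Once these routine but lengthy checks are in place, the passage to the Radford biproduct is essentially bookkeeping through \prref{splitmono1}.
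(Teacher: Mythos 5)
Your proposal is correct and follows essentially the same route as the paper: a direct verification that $\rhd_{\gamma}$ and $f_{\gamma}$ are coalgebra maps and that $(BE2)$, $(BE4)$, $(BE5)$ follow from the assumed conditions via insertions of $\gamma * \gamma^{-1}=\varepsilon_H 1_A$, followed by \prref{splitmono1} and Radford's theorem \cite[Theorem 3]{radford} applied to the splitting $p_{\gamma}(a\ltimes h)=a\gamma(h)$, with $L$ the coinvariants exactly as in the paper. Only a minor bookkeeping remark: in the paper's computation $(BE4)$ is deduced from $(BE3)$ and $(BE6)$ (not $(BE1)$), and the coalgebra-map property of $\rhd_{\gamma}$ (resp. $f_{\gamma}$) genuinely needs $(BE6)$ (resp. $(BE7)$), but since all four conditions are hypotheses this does not affect your argument.
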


\begin{proof}
First we have to prove that $\rhd_{\gamma}$ and $f_{\gamma}$ are
coalgebra maps. Using the fact that $\gamma^{-1} = S_A \circ
\gamma$ is an antimorphism of coalgebras we have:
\begin{eqnarray*}
\Delta_A (h \rhd_{\gamma} a) &=& \gamma (h_{(1)}) a_{(1)}
\gamma^{-1} ( h_{(4)} \lhd a_{(4)} ) \ot \gamma (h_{(2)}) a_{(2)}
\gamma^{-1} ( h_{(3)} \lhd a_{(3)} )\\
&=& \gamma (h_{(1)}) a_{(1)} \gamma^{-1} ( h_{(3)} \lhd a_{(3)} )
\ot h_{(2)} \rhd_{\gamma} a_{(2)}\\
&\stackrel{(BE6)} =& \gamma (h_{(1)}) a_{(1)} \gamma^{-1} (
h_{(2)} \lhd a_{(2)} ) \ot h_{(3)} \rhd_{\gamma} a_{(3)}\\
&=& h_{(1)} \rhd_{\gamma} a_{(1)} \ot h_{(2)} \rhd_{\gamma}
a_{(2)}
\end{eqnarray*}
for all $h\in H$ and $a\in A$, i.e. $\rhd_{\gamma}$ is a coalgebra
map. On the other hand, we have:
\begin{eqnarray*}
\Delta_A (f_{\gamma} (h, g) ) &=& \gamma(h_{(1)}) \,
\gamma(g_{(1)}) \, \gamma^{-1}(h_{(4)} \cdot g_{(4)}) \ot
\gamma(h_{(2)}) \, \gamma(g_{(2)}) \, \gamma^{-1}(h_{(3)} \cdot
g_{(3)})\\
&=& \gamma(h_{(1)}) \, \gamma(g_{(1)}) \, \gamma^{-1}(h_{(3)}
\cdot g_{(3)}) \ot f_{\gamma} (h_{(2)}, \, g_{(2)} )\\
&\stackrel{(BE7)} =& f_{\gamma} (h_{(1)}, \, g_{(1)} ) \ot
f_{\gamma} (h_{(2)}, \, g_{(2)} )\\
&=& (f_{\gamma} \ot f_{\gamma} ) \Delta_{H\ot H} (h\ot g)
\end{eqnarray*}
for all $h$, $g\in H$, that is $f_{\gamma}$ is a coalgebra map.

It remains to prove that the compatibility conditions $(BE2)$,
$(BE4)$ and $(BE5)$ hold for $\rhd_{\gamma}$ and $f_{\gamma}$. For
$a$, $b\in A$ and $g\in H$ we have:
\begin{eqnarray*}
g \rhd_{\gamma} (a b) &=& \gamma( g_{(1)} ) a_{(1)} b_{(1)}
\gamma^{-1} \bigl( g_{(2)} \lhd (a_{(2)}b_{(2)}) \bigl)\\
&=& \gamma (g_{(1)}) a_{(1)} \gamma^{-1} (g_{(2)} \lhd a_{(2)})
\gamma (g_{(3)} \lhd a_{(3)} ) b_{(1)} \gamma^{-1} \bigl( g_{(4)}
\lhd (a_{(4)}b_{(2)}) \bigl)\\
&=& (g_{(1)} \triangleright_{\gamma}
a_{(1)})[(g_{(2)}\triangleleft a_{(2)})\triangleright_{\gamma} b]
\end{eqnarray*}
i.e. $(BE2)$ holds. We denote by LHS (resp. RHS) the left (resp.
right) hand side of $(BE4)$. We have:
\begin{eqnarray*}
LHS &=& \gamma(g_{(1)}) (h_{(1)} \rhd_{\gamma} a_{(1)} )
\gamma^{-1} \Bigl ( g_{(2)}\lhd ( h_{(2)} \rhd_{\gamma} a_{(2)})
\Bigl) \gamma \Bigl ( g_{(3)}\lhd ( h_{(3)} \rhd_{\gamma} a_{(3)})
\Bigl)\\
&& \gamma (h_{(5)} \lhd a_{(5)}) \gamma^{-1} \Bigl ( \bigl
(g_{(4)}\lhd ( h_{(4)} \rhd_{\gamma} a_{(4)}) \bigl) \cdot
(h_{(6)} \lhd a_{(6)}) \Bigl)\\
&=& \gamma(g_{(1)}) (h_{(1)} \rhd_{\gamma} a_{(1)} ) \gamma
(h_{(3)} \lhd a_{(3)})  \gamma^{-1} \Bigl ( \bigl (g_{(2)}\lhd (
h_{(2)} \rhd_{\gamma} a_{(2)}) \bigl) \cdot
(h_{(4)} \lhd a_{(4)}) \Bigl)\\
&\stackrel{(BE6)} =& \gamma(g_{(1)}) (h_{(1)} \rhd_{\gamma}
a_{(1)} ) \gamma (h_{(2)} \lhd a_{(2)})  \gamma^{-1} \Bigl ( \bigl
(g_{(2)}\lhd ( h_{(3)} \rhd_{\gamma} a_{(3)}) \bigl) \cdot
(h_{(4)} \lhd a_{(4)}) \Bigl)\\
&\stackrel{(BE3)} =& \gamma(g_{(1)}) (h_{(1)} \rhd_{\gamma}
a_{(1)} ) \gamma (h_{(2)} \lhd a_{(2)})  \gamma^{-1} \Bigl ( \bigl
(g_{(2)}\cdot h_{(3)} \bigl) \lhd \, a_{(3)} \Bigl )\\
&\stackrel{\equref{iner11}} =& \gamma(g_{(1)}) \gamma(h_{(1)})
a_{(1)} \gamma^{-1} \Bigl ( \bigl (g_{(2)}\cdot h_{(2)} \bigl)
\lhd \, a_{(2)} \Bigl )\\
&=& RHS
\end{eqnarray*}
for all $a\in A$, $h$, $g\in H$, i.e. $(BE4)$ holds. Now, for $g$,
$h$ and $l\in H$ the right hand side of $(BE5)$ takes the
following form:
$$
RHS = \gamma(g_{(1)})\gamma(h_{(1)})\gamma(l_{(1)}) \gamma^{-1}
\bigl( (g_{(2)} \cdot h_{(2)})\cdot l_{(2)} \bigl)
$$
while the left hand side of $(BE5)$ is
\begin{eqnarray*}
LHS &=& \gamma(g_{(1)}) f_{\gamma} (h_{(1)}, l_{(1)})
\gamma(h_{(3)} \cdot l_{(3)}) \gamma^{-1} \Bigl ( \bigl( g_{(2)}
\lhd f_{\gamma} (h_{(2)}, l_{(2)}) \bigl) \cdot (h_{(4)}\cdot
l_{(4)}) \Bigl )\\
&\stackrel{(BE7)} =& \gamma(g_{(1)}) f_{\gamma} (h_{(1)}, l_{(1)})
\gamma(h_{(2)} \cdot l_{(2)}) \gamma^{-1} \Bigl ( \bigl( g_{(2)}
\lhd f_{\gamma} (h_{(3)}, l_{(3)}) \bigl) \cdot (h_{(4)}\cdot
l_{(4)}) \Bigl )\\
&\stackrel{(BE1)} =& \gamma(g_{(1)}) f_{\gamma} (h_{(1)}, l_{(1)})
\gamma(h_{(2)} \cdot l_{(2)}) \gamma^{-1} \bigl( (g_{(2)} \cdot
h_{(3)})\cdot l_{(3)} \bigl)\\
&\stackrel{\equref{iner22}} =&
\gamma(g_{(1)})\gamma(h_{(1)})\gamma(l_{(1)}) \gamma^{-1} \bigl(
(g_{(2)} \cdot h_{(2)})\cdot l_{(2)} \bigl) = RHS
\end{eqnarray*}
hence $(BE5)$ also holds and thus $\Omega(A) = (H, \triangleleft,
\, \triangleright_{\gamma}, \, f_{\gamma} )$ is a bialgebra
extending structure of $A$.

For the final part we use \prref{splitmono1} as $p : A \ltimes H
\rightarrow A$, $p (a \ltimes h) = a \gamma (h)$ is a bialgebra
map that splits $i_A: A \to A\ltimes H$. Thus, it follows from
\cite[Theorem 3]{radford} that there exists an isomorphism of
bialgebras $A \ltimes H \cong L \ast A$, where $(L,
\rightharpoonup ,\rho_L)$ is a bialgebra in ${}_{A}^{A}{}{\mathcal
YD}$ as follows:
$$
L = \{ \sum_i a_i \ltimes h_i \in A  \ltimes H \, | \,
\sum_i a_{i_{(1)}} \ltimes h_{i_{(1)}} \ot a_{i_{(2)}} \gamma (h_{i_{(2)}}) = \sum_i a_i \ltimes h_i \ot 1_A \}
$$
with the coalgebra structure described below:
\begin{eqnarray*}
\Delta_L ( \sum_i a_i \ltimes h_i ) &=& \sum_i (a_{i_{(1)}} \ltimes h_{i_{(1)}})
\bullet \bigl( 1_A \ltimes S_A ( a_{i_{(2)}} \gamma (h_{i_{(2)}}))  \bigl) \, \ot \,  a_{i_{(3)}} \ltimes h_{i_{(3)}}\\
\varepsilon_L ( \sum_i a_i \ltimes h_i  ) &=& \sum_i \varepsilon_A (a_i) \varepsilon_H (h_i)
\end{eqnarray*}
for all $\sum_i a_i \ltimes h_i \in L$ and the structure of an
object in ${}_{A}^{A}{}{\mathcal YD}$ given by
\begin{eqnarray*}
a \rightharpoonup \sum_i a_i \ltimes h_i &=&
\sum_i ( a_{(1)} a_i \ltimes h_i) \bullet (1_A \ltimes S_A (a_{(2)}))\\
\rho_L (\sum_i a_i \ltimes h_i) &=& \sum_i a_{i_{(1)}} \gamma (h_{i_{(1)}}) \ot a_{i_{(2)}} \ltimes h_{i_{(2)}}
\end{eqnarray*}
for all $a \in A$ and $\sum_i a_i \ltimes h_i \in L$.
\end{proof}

Using \thref{construnifbirp} we shall construct an example of an
unified product, that is also isomorphic to a biproduct, starting
with a minimal set of data.

\begin{example}\exlabel{ultex}
Let $G$ be a group, $(X, 1_X)$ be a pointed set, such that there
exists a binary operation $ \cdot : X \times X \to X$ having $1_X$
as a unit.

Let $ \lhd : X \times G \to X$ be a map such that $(X, \lhd)$ is a
right $G$-set with $1_X \triangleleft g = 1_X$, for all $g \in G$
and $\gamma : X \to G$ a map with $\gamma (1_X) = 1_G$ such that
the following compatibilities hold
\begin{eqnarray}
(x \cdot y) \cdot z &=& \Bigl ( x \lhd \bigl( \gamma(y) \gamma (z)
\gamma (y\cdot z) ^{-1} \bigl) \Bigl ) \cdot (y \cdot z)
\eqlabel{be1}\\
(x \cdot y) \lhd  g &=& \Bigl( x \lhd \bigl( \gamma (y) \, g \,
\gamma ( y \lhd g )^{-1} \bigl) \Bigl) \cdot (y \lhd g)
\eqlabel{be3}
\end{eqnarray}
for all $g\in G$, $x$, $y$, $z\in X$.

Let $A := k[G]$. Then $(H := k[X], \, \lhd, \, \rhd_{\gamma}, \,
f_{\gamma} )$ is a bialgebra extending structure of $k[G]$, where
$\rhd_{\gamma}$, $f_{\gamma}$ are given by \equref{iner11},
\equref{iner22}, that is
$$
x \rhd_{\gamma} g = \gamma (x) g \gamma (x \lhd g) ^{-1}, \quad
f_{\gamma} (x, y) = \gamma (x) \gamma (y) \gamma (x \cdot y)^{-1}
$$
for all $x$, $y\in X$ and $g\in G$.

Indeed, \equref{be1} and \equref{be3} show that the compatibility
conditions $(BE1)$ and respectively $(BE3)$ hold. Now, the
compatibilities $(BE6)$ and $(BE7)$ are trivially fulfilled as
$k[G]$ and $k[X]$ are cocommutative coalgebras. Thus, it follows
from \thref{construnifbirp} that $(k[X], \, \lhd, \,
\rhd_{\gamma}, \,$ $f_{\gamma} )$ is a bialgebra extending
structure of $k[G]$ and we can construct the unified product $E :=
k[G] \ltimes k[X]$ associated to $(k[X], \, \lhd, \,
\rhd_{\gamma}, \, f_{\gamma} )$. As a vector space $ E = k[G] \ot
k[X]$ and has the multiplication given by \equref{10a} which in
this case takes the form:
$$
(g \ltimes x)\bullet(h \ltimes y) = g \, \gamma (x)\, h \,\gamma
(y) \, \gamma \Bigl ( (x \lhd h) \cdot y \Bigl)^{-1} \ltimes (x
\lhd h) \cdot y
$$
for all $g$, $h\in G$ and $x$, $y\in X$. The coalgebra structure
of $E$ is the tensor product of the group-like coalgebras $k[G]$
and $k[X]$.

The multiplication on the bialgebra $k[G] \ltimes k[X]$ can be simplified
as follows: using \prref{produsulciudat}, we obtain that there exists an isomorphism
of bialgebras
$$
k[G] \ltimes k[X] \cong k[G] \circledast k[X]
$$
where, the multiplication on $k[G] \circledast k[X]$ is given by
\equref{10b}:
$$
(g \circledast x)\bullet(h \circledast y) = gh \circledast \Bigl (
x \lhd \bigl( h \gamma (y) \bigl)^{-1} \Bigl) \, \cdot \, y
$$
for all $g$, $h\in G$ and $x$, $y\in X$.
\end{example}

\begin{example}
The construction of an explicit datum $(G, X, \cdot, \lhd,
\gamma)$ as in \exref{ultex} is a challenging problem considering
the compatibilities \equref{be1}-\equref{be3} that need to be
fulfilled by the map $\gamma : X \to G$. We give such an example
bellow. Let $G$ be a group, $(X, 1_{X})$ a pointed set such that
$(X, \lhd)$ is a right $G$-set with $1_X \triangleleft g = 1_X$,
for all $g \in G$. Let $ \cdot : X \times X \to X$ be the
following binary operation:
$$x \cdot y := \left \{\begin{array}{rcl}
x, \, & \mbox { if }& y = 1_X\\
y, \, & \mbox { if }& y \neq 1_X
\end{array} \right.
$$
Then, by a straightforward computation it can be seen that the
compatibility conditions \equref{be1} and \equref{be3} are
trivially fulfilled for any map $\gamma: X \to G$ such that
$\gamma(1_{X}) = 1_{G}$. Thus $(H := k[X], \, \lhd, \,
\rhd_{\gamma}, \, f_{\gamma} )$ is a bialgebra extending structure
of $k[G]$, where $\rhd_{\gamma}$, $f_{\gamma}$ are given by:
$$
x \rhd_{\gamma} g = \gamma (x) \, g \, \gamma (x \lhd g) ^{-1},
\quad f_{\gamma} (x, y) = \gamma (x) \gamma (y) \gamma (x \cdot
y)^{-1} = \left \{\begin{array}{rcl}
1_G, \, & \mbox { if }& y = 1_X\\
\gamma (x), \, & \mbox { if }& y \neq 1_X
\end{array} \right.
$$
for all $x$, $y\in X$ and $g\in G$.
\end{example}

\end{document}